\newcommand{\PP}{{\mathbb{P}}}
\newcommand{\QQ}{{\mathbb{Q}}}
\newcommand{\RR}{{\mathbb{R}}}
\DeclareMathOperator{\dom}{dom}
\DeclareMathOperator{\Add}{Add}
\DeclareMathOperator{\REG}{REG}
\DeclareMathOperator{\CARD}{CARD}
\DeclareMathOperator{\ZFC}{ZFC}
\DeclareMathOperator{\GCH}{GCH}
\DeclareMathOperator{\supp}{supp}
\DeclareMathOperator{\Ult}{Ult}
\DeclareMathOperator{\limdir}{lim\ dir}
\DeclareMathOperator{\Sacks}{Sacks}
\def\MPB{{\mathbb{P}}}
\def\MQB{{\mathbb{Q}}}
\def\k{\kappa}
\def\l{\lambda}
\def\a{\alpha}
\def\b{\beta}
\newtheorem{theorem}{Theorem}[section]
\newtheorem{lemma}[theorem]{Lemma}
\newtheorem{definition}[theorem]{Definition}
\newtheorem{remark}[theorem]{Remark}
\newtheorem{claim}[theorem]{Claim}
\newtheorem{question}[theorem]{Question}
\numberwithin{equation}{section}
\def\l{\lambda}
\def\rmark{\mbox{$\rm\bf\rule{0.06em}{1.45ex}\kern-0.05em R$}}
\def\pmark{\mbox{$\rm\bf\rule{0.06em}{1.45ex}\kern-0.05em P$}}
\def\nmark{\mbox{$\rm\bf\rule{0.06em}{1.45ex}\kern-0.05em N$}}
\def\vdash{\mbox{$\rm\| \kern-0.13em -$}}
\def\l{\lambda}
\def\rmark{\mbox{$\rm\bf\rule{0.06em}{1.45ex}\kern-0.05em R$}}
\def\pmark{\mbox{$\rm\bf\rule{0.06em}{1.45ex}\kern-0.05em P$}}
\def\nmark{\mbox{$\rm\bf\rule{0.06em}{1.45ex}\kern-0.05em N$}}
\def\vdash{\mbox{$\rm\| \kern-0.13em -$}}
\newcommand{\lusim}[1]{\smash{\underset{\raisebox{1.2pt}[0cm][0cm]{$\sim$}}
{{#1}}}}
\begin{document}

\title[An Easton like theorem in the presence of Shelah Cardinals]{An Easton like theorem  in the presence of Shelah Cardinals}

\author[M. Golshani ]{Mohammad
  Golshani }

\thanks{The author's research has been supported by a grant from IPM (No. 91030417).} \maketitle

%{ \\ Department of Mathematics\\  Shahid Bahonar University of Kerman, Kerman, Iran}

%\subjclass[2010]{03E35}

%\keywords{ Multiplication mudule, prime submodule, Strongly prime submodule, valuation,
%fractional submodule, pseudo-valuation module}
\begin{abstract}
 We show that Shelah cardinals are preserved under  the canonical $\GCH$ forcing notion.
We also show that if $\GCH$ holds and $F:\REG\rightarrow \CARD$ is an Easton function which satisfies some weak properties, then there exists a cofinality preserving generic extension of the universe which preserves Shelah cardinals and satisfies  $\forall \kappa\in \REG,~ 2^{\kappa}=F(\kappa)$. This gives a partial answer
to a question asked by Cody \cite{cody} and independently by Honzik \cite{honzik}.
We also prove an indestructibility result for Shelah cardinals.
\end{abstract}

\section{Introduction}
In early 90's ,   Shelah cardinals were introduced by Shelah \cite{shelah-woodin} ,  to reduce the large cardinal strength of   Lebesgue measurability and the Baire property of definable sets of reals in $L(\mathbb{R})$ from supercompact cardinals to the much smaller large cardinals .  In the same paper ,  H.  Woodin introduced another kind of large cardinals ,  now called Woodin cardinals ,  which are much weaker than Shelah cardinals ,  and deduced the same results from them .  Later  analysis of the nature of these problems by Woodin and others ,  unfolded the fact that  Woodin cardinals are the correct ones for the study of such problems .   However Shelah cardinals remained of interest for several reasons including the core model project,  which has been stopped on the border of Shelah cardinals .

Recall from  \cite{shelah-woodin} that
an uncountable cardinal $\k$ is called a Shelah cardinal, if for every $f:\k \rightarrow \k,$ there exists an elementary embedding $j: V \rightarrow M$ with $crit(j)=\k$ such that $^{\k}M \subseteq M$ and $V_{j(f)(\k)} \subseteq M$.
It is also easily seen that a cardinal $\kappa$ is a Shelah cardinal if and only if, if for every $f:\k \rightarrow \k,$ there exists an elementary embedding $j: V \rightarrow M$ with $crit(j)=\k$ such that $^{\k}M \subseteq M$ and $H(j(f)(\k)) \subseteq M.$

Though the above definition is not expressible in $\ZFC,$ but  it is easily seen that we can formalize it in $\ZFC$ using the notion of extenders, which we refer to
\cite{martin-steel} for the definition and basic properties of them. In fact,  a cardinal $\kappa$
is a Shelah cardinal iff there exists a cardinal $\lambda$ such that for any $f: \kappa \to \kappa$, there exists an extender $E \in V_\lambda$
with $crit(j_E)=\kappa$ and $V_{j_E(f)(\kappa)} \subseteq \supp(E)$, where $j_E$ is the elementary embedding induced by $E$ \footnote{Recall from \cite{martin-steel}
that if $Y$ is a transitive set and $E= \langle E_s: s \in [Y]^{<\omega} \rangle$ is an extender, then $\supp(E)=Y$.}.

Shelah cardinals lie between Woodin cardinals and superstrong cardinals in the large cardinal hierarchy.
In fact, if $\kappa$ is a Shelah cardinal, then $\kappa$ is a Woodin cardinal and there are $\kappa$-many Woodin cardinals below $\kappa.$
On the other hand if $\kappa$ is a superstrong cardinal, then $\kappa$ is a Shelah cardinal and there are $\kappa$-many Shelah cardinals below $\kappa.$

In this paper we study Shelah cardinals and their relation with the continuum function. We show that Shelah cardinals are preserved under  the canonical $\GCH$ forcing notion.
We also prove an analogue of Easton's theorem in the presence of Shelah cardinals, which partially answers a question of Cody \cite{cody} and Honzik \cite{honzik}.
Also indestructibility of Shelah cardinals under some classes of Prikry type forcing notions is proved, which is similar to the Gitik-Shelah indestructibility
result for strong cardinals \cite{gitik-shelah}.

The structure of the paper is as follows.  In section 2 we discuss the notion of witnessing number of a Shelah cardinal
which plays an important role in latter sections of the paper. In section 3 we show that Shelah cardinals are preserved under the canonical
$\GCH$ forcing, so that $\GCH$ is consistent with the existence of Shelah cardinals.  In section 4 we prove an Easton like theorem in the presence of Shelah cardinals  and finally in section 5 we prove an indestructibility result for Shelah cardinals which is similar to the Gitik-Shelah indestructibility result for strong cardinals \cite{gitik-shelah}.

The next lemma is folklore and we will use it repeatedly in the paper.
We prove a proof for completeness.
\begin{lemma}\label{dominating function lemma}
If $\kappa$ is a regular cardinal in $V$ and $\mathbb{P}$ satisfies one of the following conditions, then for every function $f:\kappa\rightarrow\kappa$ in a $\PP$-generic extension of $V,$ there exists a function $g:\kappa\rightarrow\kappa$ in $V$ such that $g$ dominates $f$, i.e., $\forall\alpha\in \kappa~~~f(\alpha)<g(\alpha)$.
\begin{enumerate}
\item[(1)] $|\mathbb{P}|<\kappa$.
\item[(2)] $\mathbb{P}$ is $\kappa$-c.c.
\item[(3)] $\mathbb{P}$ is $\kappa^{+}$-distributive.
\end{enumerate}
\end{lemma}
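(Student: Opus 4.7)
The plan is to handle the three hypotheses in parallel, in each case reading off an upper-bounding function directly from a name for $f$. Given $f:\kappa\to\kappa$ in $V[G]$, fix a $\mathbb{P}$-name $\dot f$ with $\dot f^G=f$ and a condition $p\in G$ forcing $\dot f:\check\kappa\to\check\kappa$. The aim is then to construct $g\in V$ from $\dot f$ and $p$ satisfying $p\Vdash \dot f(\check\alpha)<\check g(\check\alpha)$ for every $\alpha<\kappa$, which upon interpretation in $V[G]$ yields $f(\alpha)<g(\alpha)$.

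Clause (3) is essentially free: $\kappa^+$-distributivity means $\mathbb{P}$ adds no new $\kappa$-sequences of ordinals, so in fact $f\in V$ and $g=f+1$ dominates pointwise. Clause (1) is subsumed by clause (2), since a poset of cardinality below $\kappa$ is automatically $\kappa$-c.c. For clause (2), the construction I have in mind is: for each $\alpha<\kappa$, let
$A_\alpha=\{\beta<\kappa:\exists q\leq p,\ q\Vdash \dot f(\check\alpha)=\check\beta\}$.
Each $\beta\in A_\alpha$ is witnessed by a distinct member of a maximal antichain below $p$ of conditions deciding $\dot f(\check\alpha)$, so $|A_\alpha|<\kappa$ by the $\kappa$-c.c.; since $p$ forces these values into $\kappa$, regularity of $\kappa$ gives $\sup A_\alpha<\kappa$. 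Setting $g(\alpha)=\sup A_\alpha+1$ then defines a function $g\in V$ with $g:\kappa\to\kappa$, and by construction no extension of $p$ can force $\dot f(\check\alpha)\geq\check g(\check\alpha)$.

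There is no real obstacle here---the statement is a standard forcing exercise. The single point worth flagging is the use of regularity of $\kappa$ in clause (2), which is what converts ``fewer than $\kappa$ possible values, each below $\kappa$'' into a supremum strictly below $\kappa$; without regularity this step collapses.
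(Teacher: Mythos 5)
Your proposal is correct and follows essentially the same route as the paper: (3) is trivial, (1) reduces to (2), and for (2) one bounds, for each $\alpha$, the set of possible values of $\dot f(\alpha)$ via a maximal antichain of deciding conditions and uses the $\kappa$-c.c.\ together with regularity of $\kappa$ to take $g(\alpha)$ above the supremum. The only cosmetic difference is that you collect the possible values directly and count them through an antichain, while the paper takes the supremum over values forced by a fixed maximal antichain inside the dense set $D_\alpha$; the arguments are the same.
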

\begin{proof}
$(3)$ is trivial, as such a forcing notion produces no new functions $f: \k \rightarrow \k,$ and $(1)$ follows from $(2),$ so it suffices to prove $(2).$ Thus suppose that $\PP$ is $\k$-c.c., and let $f\in V^\PP$ be a function from $\k$ to $\k.$ Fix a $\PP$-name $\lusim{f}$ for $f$, and for each $\a<\k$ set
\begin{center}
$D_\a=\{p\in \PP: \exists \beta<\k, p\Vdash$``$\lusim{f}(\a)=\beta$''$\}.$
\end{center}
$D_\a$ is a dense subset of $\PP.$ Let $A_\a \subseteq D_\a$ be a maximal antichain of $\PP,$ and set
\begin{center}
$g(\a)=\sup\{\b: \exists p\in A_\a,  p\Vdash$``$\lusim{f}(\a)=\beta$''$    \}+1.$
\end{center}
$g:\k \rightarrow \k$ is well-defined (as each $A_\a$ has size $<\k$), and it dominates $f$.
\end{proof}

\section{Properties of witnessing number of Shelah cardinals}
In this section we introduce the witnessing number of a Shelah cardinal, which plays an important role in subsequent sections, and discuss some of its properties.
\begin{definition}
(Suzuki \cite{suzuki}) Given a Shelah cardinal $\k,$ the witnessing number of $\k$, denoted $wt(\kappa),$ is the least cardinal $\lambda$ such that for any $f: \k \rightarrow \k,$ there exists an extender $E\in V_\l$
witnessing the Shelahness of $\k$ with respect to $f$.
\end{definition}
\begin{remark}
\begin{enumerate}
\item Let $\k$ be a Shelah cardinal, and for each $f: \k \rightarrow \k,$
 let $E_f$ be an extender of minimal rank such that its ultrapower $j_f: V \to M \simeq \Ult(V, E_f)$ witnesses the Shelahness of $\k$ with respect to $f$.
Then $wt(\k)=\sup\{j_f(f)(\k): f: \k \rightarrow \k \};$ in particular
 $2^\kappa < wt(\k)$ $($ see Lemma $2.3(2)$$)$ and it is a singular cardinal with $\k< cf(wt(\k)) \leq 2^\k.$
\item  Let $\k$ be a Shelah cardinal, and let $\l < wt(\k).$ Then there is $f: \k \rightarrow \k$ and an elementary embedding $j: V \rightarrow M \supseteq V_{j(f)(\k)}$ with $crit(j)=\k$ such that $\l < j(f)(\k).$
\item Let $\k$ be a Shelah cardinal, $f,g: \k \rightarrow \k$ and $\forall \a<\k, f(\a) < g(\a)+\omega.$ Further suppose that $f(\a) \geq \a+1,$ for all
  $\a < \k.$ Let $j_f: V \rightarrow M_f$ and $j_g: V \rightarrow M_g$ witness the Shelahness of $\k$ with respect to $f$ and $g$ respectively. Then we can assume that  $j_g(f)(\k)=j_f(f)(\k);$ in particular $j_g$ also witnesses the  Shelahness of $\k$ with respect to $f$. To see this, let $F$ be the extender derived from
  $j_{g}$ with support $\supp(F)= V_{j_{g}(f)(\k)}$. As $j_g(f)(\k) \geq \k+1$, we have $\k \in V_{j_{g}(f)(\k)}$ and hence such an extender exists. Further we have
  $j_g(f)(\k) = j_F(f)(\k)$ and if $j_F: V \to M_F \simeq \Ult(V, F)$ is the ultrapower embedding, then  $V_{j_F(f)(\k)} \subseteq M_F.$ So by replacing $E_f$ with $F$ if necessary, we can assume that $j_g(f)(\k)=j_f(f)(\k).$
\end{enumerate}
\end{remark}

\begin{lemma}
(Suzuki \cite{suzuki}) Let $\k$ be a Shelah cardinal. Then

$(1)$ $\forall \xi < wt(\k), \k$ is a $\xi$-strong cardinal.

$(2)$ $\{\l< wt(\k): \l$ is a measurable Woodin cardinal$ \}$ is unbounded in $wt(\k).$
\end{lemma}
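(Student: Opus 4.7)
The plan is to read (1) directly off Remark 2.2(2), and to prove (2) by applying Shelahness to a function whose image at $\kappa$ under the induced embedding is forced to be a measurable Woodin cardinal strictly above $\xi$. Remark 2.2(3) will be used to splice together the witnesses for two different functions.

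For (1), fix $\xi<wt(\kappa)$. Remark 2.2(2) supplies $f\colon\kappa\to\kappa$ and $j\colon V\to M$ with $\crit(j)=\kappa$, $V_{j(f)(\kappa)}\subseteq M$, and $\xi<j(f)(\kappa)$. Elementarity makes $j(f)$ a function $j(\kappa)\to j(\kappa)$, so $j(f)(\kappa)<j(\kappa)$; hence $j(\kappa)>\xi$ and $V_\xi\subseteq V_{j(f)(\kappa)}\subseteq M$, so $\kappa$ is $\xi$-strong.

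For (2), I first observe that measurable Woodin cardinals are unbounded below $\kappa$. Applying Shelahness to $\alpha\mapsto\alpha+3$ produces $j\colon V\to M$ with $V_{\kappa+3}\subseteq M$; since ``$\kappa$ is a measurable Woodin'' is a $V_{\kappa+2}$-statement and holds of $\kappa$ in $V$ (Shelahness implies measurability and Woodinness), it holds of $\kappa$ in $M$, and elementarity of $j$ (which is trivial below $\kappa$) reflects this to arbitrarily large $\lambda<\kappa$. Hence $\alpha\mapsto(\text{least measurable Woodin}>\alpha)$ is a well-defined map from $\kappa$ to $\kappa$. Now fix $\xi<wt(\kappa)$ and, using Remark 2.2(1), choose $f\colon\kappa\to\kappa$ with $f(\alpha)\geq\alpha+1$ and $j_f(f)(\kappa)>\xi$ (any $f$ with $j_f(f)(\kappa)>\xi$ can be replaced by $\max(f,\,\mathrm{id}+1)$ without loss). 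Define $g(\alpha)=\gamma(\alpha)+2$, where $\gamma(\alpha)$ is the least measurable Woodin cardinal above $f(\alpha)$. Since $f(\alpha)<g(\alpha)+\omega$, Remark 2.2(3) yields $j_g\colon V\to M_g$ with $V_{j_g(g)(\kappa)}\subseteq M_g$ and $j_g(f)(\kappa)=j_f(f)(\kappa)>\xi$. By elementarity applied to the definition of $g$, $j_g(g)(\kappa)=\beta+2$, where $\beta$ is the least measurable Woodin of $M_g$ above $j_g(f)(\kappa)$. Then $V_{\beta+2}\subseteq M_g$, and since being a measurable Woodin at $\beta$ is a $V_{\beta+2}$-statement (Woodinness is expressible in $V_{\beta+1}$, and the normal measure lives in $V_{\beta+2}$), $\beta$ is a measurable Woodin cardinal in $V$; clearly $\xi<j_g(f)(\kappa)<\beta<j_g(g)(\kappa)\leq wt(\kappa)$.

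The main obstacle is the final absoluteness step: one needs $V_{\beta+2}\subseteq M_g$, rather than just $V_\beta\subseteq M_g$, in order to transfer the measurability of $\beta$, because the normal measure sits one level above $V_\beta$. This is precisely why the definition of $g$ carries the offset ``$+2$''. Remark 2.2(3) plays a double role: it lets us pick a single embedding $j_g$ that simultaneously witnesses Shelahness for $g$ (yielding the containment $V_{j_g(g)(\kappa)}\subseteq M_g$) and for $f$ (pinning down the lower bound $j_g(f)(\kappa)>\xi$), which is what places $\beta$ above $\xi$ rather than merely above $\kappa$.
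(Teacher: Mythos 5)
The paper gives no proof of this lemma at all; it is quoted from Suzuki's paper, so there is nothing internal to compare against except the analogous machinery in Remark 2.2 and Theorem 2.8, and your argument is a faithful reconstruction in exactly that spirit. Part (1) is correct and complete: Remark 2.2(2) hands you $j$ with $V_{j(f)(\kappa)}\subseteq M$ and $\xi<j(f)(\kappa)<j(\kappa)$, which is literally $\xi$-strength. Part (2) is also structured correctly (reflect measurable Woodinness below $\kappa$ using the function $\alpha\mapsto\alpha+3$, then apply Shelahness to $g(\alpha)=\gamma(\alpha)+2$ and use that $V_{j_g(g)(\kappa)}\subseteq M_g$ makes ``measurable Woodin'' absolute at $\beta$), and your use of Remark 2.2(3) to keep $j_g(f)(\kappa)>\xi$ matches how the paper itself uses that remark in Theorem 2.8.

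The one step you should not label ``clearly'' is the final inequality $j_g(g)(\kappa)\leq wt(\kappa)$, which is exactly what turns ``there is a measurable Woodin above $\xi$'' into ``there is one below $wt(\kappa)$''. For an arbitrary embedding witnessing Shelahness with respect to $g$ there is no a priori bound on $j_g(g)(\kappa)$: by elementarity $\beta$ is the genuine least measurable Woodin above $j_g(f)(\kappa)$, and arguing that this must lie below $wt(\kappa)$ is precisely the statement being proved, so quoted as is the step is circular. The fix is immediate but should be said: by the definition of the witnessing number, choose the witnessing extender $E_g$ for $g$ inside $V_{wt(\kappa)}$ (equivalently, of minimal rank as in Remark 2.2(1)); since $V_{j_{E_g}(g)(\kappa)}\subseteq\supp(E_g)$ and $E_g\in V_{wt(\kappa)}$, one gets $j_{E_g}(g)(\kappa)<wt(\kappa)$ and hence $\beta<wt(\kappa)$. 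A related, milder caveat: Remark 2.2(3) as stated adjusts the witness for $f$ to match $j_g$, so to know $j_g(f)(\kappa)>\xi$ one should really choose $f$ so that no extender of small rank (below roughly $\xi+\omega$) witnesses Shelahness for $f$, whence every witness, in particular the cut-down of $j_g$, has $j(f)(\kappa)>\xi$; this is the same looseness present in the paper's own proof of Theorem 2.8, so it is a point to tighten rather than an error peculiar to your argument.
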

Thus the existence of Shelah cardinals imply the existence of large cardinals above them. In the next lemmas we show that the above results are in some sense the best we can prove. We show that it is consistent that $\k$ is not $wt(\k)$-strong and that there are no large cardinals above $wt(\k)$.
\begin{lemma}\label{Distributive forcing and Shelah cardinals}
If $\kappa$ is a Shelah cardinal, then any $wt(\kappa)$-distributive forcing notion preserves the Shelahness of $\kappa$.
\end{lemma}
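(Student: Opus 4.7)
The plan is to take an arbitrary $f\colon\kappa\to\kappa$ in $V[G]$, produce inside $V$ an extender $E\in V_{wt(\kappa)}$ witnessing Shelahness of $\kappa$ with respect to $f$, and then reinterpret this same $E$ as a $V[G]$-extender whose ultrapower witnesses Shelahness of $\kappa$ with respect to $f$ in $V[G]$. No lifting of $j_E$ through a generic for $j_E(\mathbb{P})$ is required, because the chosen extender is small enough that $\mathbb{P}$ neither alters it nor its coordinate measures.

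Since $\mathbb{P}$ is $wt(\kappa)$-distributive and $\kappa<wt(\kappa)$, no new $\kappa$-sequences of ordinals are added, so $f\in V$. Using Remark 2.2 I pick $E\in V_{wt(\kappa)}$ with $j_E\colon V\to M\simeq\Ult(V,E)$, $\crit(j_E)=\kappa$, ${}^\kappa M\subseteq M$, and $V_\lambda\subseteq M$, where $\lambda:=j_E(f)(\kappa)<wt(\kappa)$. Because $wt(\kappa)$ is a strong limit cardinal---Lemma 2.3(2) supplies cofinally many measurable Woodin (hence inaccessible) cardinals below $wt(\kappa)$---a routine induction on rank, using $wt(\kappa)$-distributivity, yields $V_\alpha^{V[G]}=V_\alpha^V$ for every $\alpha\leq wt(\kappa)$.

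Now view $E$ inside $V[G]$. Since $\supp(E)\in V_{wt(\kappa)}$, one has $\mathcal{P}(\supp(E))^{V[G]}=\mathcal{P}(\supp(E))^V$, so each coordinate ultrafilter $E_s$ remains an ultrafilter in $V[G]$; its $\kappa$-completeness, normality, and the coherence relations between different $E_s$ transfer verbatim because sequences of length $<wt(\kappa)$ of sets from $V$ are unchanged by $\mathbb{P}$. Hence $E$ is a $V[G]$-extender. Form $M^{*}:=\Ult(V[G],E)$ with embedding $j^{*}\colon V[G]\to M^{*}$. The natural map $[h]_E^V\mapsto[h]_E^{V[G]}$ embeds $M$ into $M^{*}$ (equivalence is determined by the same $E_s$) and satisfies $j^{*}\restricted V=j_E$; in particular $j^{*}(f)(\kappa)=\lambda$.

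The two Shelah requirements for $j^{*}$ in $V[G]$ now fall out. The rank inclusion is $V_\lambda^{V[G]}=V_\lambda^V\subseteq M\subseteq M^{*}$, the first equality by the previous paragraph and the second by the choice of $E$. For ${}^\kappa M^{*}\subseteq M^{*}$ I invoke the standard consequence of $\kappa$-completeness: any $\kappa$-sequence $\langle[h_\alpha]_E:\alpha<\kappa\rangle\in V[G]$ is represented by the function $a\mapsto\langle h_\alpha(a):\alpha<\kappa\rangle$ on a common domain furnished by $\kappa$-completeness of $E$. The main technical point is the transfer of the extender structure of $E$ from $V$ to $V[G]$ together with the identification $j^{*}\restricted V=j_E$; once $V_\alpha$-absoluteness below $wt(\kappa)$ is in hand, the remaining verifications are bookkeeping, and the smallness $E\in V_{wt(\kappa)}$ is what makes the whole scheme go through without ever having to find a generic for $j_E(\mathbb{P})$ over $M$.
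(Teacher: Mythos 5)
Your route is genuinely different from the paper's: the paper keeps the ground-model ultrapower $j_E\colon V\to M$ and lifts it, using that the filter $H$ on $j_E(\PP)$ generated by $j_E[G]$ is $M$-generic (the standard distributivity transfer argument from Cummings), to $j^*\colon V[G]\to M[H]\supseteq V_{j_E(f)(\kappa)}=(V[G])_{j^*(f)(\kappa)}$, whereas you reinterpret $E$ in $V[G]$ and form $\Ult(V[G],E)$ with no lifting at all. The reinterpretation idea can be made to work, but as written your verification of the two defining clauses of Shelahness has a real gap. The clause ${}^{\kappa}M^{*}\subseteq M^{*}$ is argued by the single-ultrafilter device: represent the sequence by $a\mapsto\langle h_\alpha(a):\alpha<\kappa\rangle$ on a ``common domain''. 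In an extender ultrapower the members of your sequence are $[a_\alpha,h_\alpha]_E$ with \emph{varying} finite seeds $a_\alpha\in[\supp(E)]^{<\omega}$, measured by different coordinate ultrafilters; there is no common measure space, and the displayed function does not represent the sequence. The correct argument uses $M^{*}=\{\,j^{*}(h)(a): h\in V[G],\ a\in[\supp(E)]^{<\omega}\,\}$: one has $\langle j^{*}(h_\alpha):\alpha<\kappa\rangle=j^{*}(\langle h_\alpha:\alpha<\kappa\rangle)\restriction\kappa\in M^{*}$, and one still needs $\langle a_\alpha:\alpha<\kappa\rangle\in M^{*}$; this sequence consists of ground-model sets, so it lies in $V$ by distributivity, lies in $M$ by ${}^{\kappa}M\subseteq M$ in $V$, and lies in $M^{*}$ only once you know $M\subseteq M^{*}$.

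That last inclusion is your second under-justified step. The observation that ``equivalence is determined by the same $E_s$'' only shows the natural map $[a,h]^{V}_E\mapsto[a,h]^{V[G]}_E$ is well defined, injective and $\in$-preserving; an $\in$-preserving map between transitive classes need not be the identity, and your claims $M\subseteq M^{*}$, $j^{*}\restriction V=j_E$ and $V_\lambda^{V}\subseteq M^{*}$ (the heart of the matter) all amount to asserting that it is. This is true here, but it needs an argument: by $wt(\kappa)$-distributivity every function in $V[G]$ from a coordinate measure space (a ground-model set of size at most $2^{\kappa}<wt(\kappa)$) into $V$ already lies in $V$, so any $[b,g]^{V[G]}_E$ lying $M^{*}$-below a class with a $V$-representative can itself be re-represented by a $V$-function; an $\in$-induction then shows the natural map is the identity on $M$. (A sentence on well-foundedness of $\Ult(V[G],E)$ -- countable sequences of ground-model sets are in $V$, so countable completeness of $E$ transfers -- is also missing.) With these repairs your proof is correct; the paper's lifting argument sidesteps all of this, since closure and the rank computation are inherited directly from $M$ and the generic $H$ costs only the routine density argument.
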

\begin{proof}
Let $\PP$ be $wt(\kappa)$-distributive and let $G$ be $\PP$-generic over $V$. Let $f\in V[G], f:\k \rightarrow \k.$ Then $f\in V.$ Let $E\in V_{wt(\k)}$ be an extender witnessing the Shelahness of $\k$ with respect to $f$. Let $H\in V[G]$ be the filter on $j(\PP)$ generated by $j[G].$ By \cite{cummings}, $H$ is in fact $j(\PP)$-generic over $M$, and $j$ lifts to some $j^*: V[G] \rightarrow M[H].$ Clearly $(V[G])_{j^*(f)(\k)}=V_{j(f)(\k)} \subseteq M \subseteq M[H],$ and hence $j^*$ witnesses that $\k$ is Shelah in $V[G]$ with respect to $f$.
\end{proof}
We now give some applications of the above lemma.
\begin{lemma}
Suppose $\kappa$ is a Shelah cardinal. There exists a generic extension $V[G]$ of $V$, in which $\kappa$ remains Shelah and there are no Mahlo cardinals above $wt(\k).$
\end{lemma}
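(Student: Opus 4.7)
The plan is to invoke Lemma~\ref{Distributive forcing and Shelah cardinals}: we need a $wt(\kappa)$-distributive forcing that destroys the Mahloness of every Mahlo cardinal $\lambda>wt(\kappa)$ of $V$. Since distributive forcings add no new sets of rank below their distributivity bound, and in particular create no new inaccessibles or Mahlos, killing the existing ones in $V$ will suffice.

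For each Mahlo cardinal $\lambda>wt(\kappa)$, let $\mathbb{Q}_\lambda$ be the forcing whose conditions are closed bounded subsets $c\subseteq\lambda$ all of whose elements are singular cardinals, ordered by end-extension. Using the Mahloness of $\lambda$ (so that singulars are cofinal and one can always diagonalize inside a club of singulars), $\mathbb{Q}_\lambda$ is $<\lambda$-strategically closed, and a generic produces a club $C\subseteq\lambda$ consisting entirely of $V$-singular cardinals. Since $wt(\kappa)$ is singular and $wt(\kappa)^+$ is a successor, each such $\lambda$ satisfies $\lambda>wt(\kappa)^+$, so $\mathbb{Q}_\lambda$ is in particular $wt(\kappa)^+$-closed. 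Let $\mathbb{P}$ be the Easton-support class product (equivalently, reverse Easton iteration) of the $\mathbb{Q}_\lambda$'s. Standard reverse-Easton arguments show $\mathbb{P}$ preserves ZFC, and splitting $\mathbb{P}\cong\mathbb{P}_{\leq\mu}\times\mathbb{P}_{>\mu}$ at any $\mu\geq wt(\kappa)$ below the first non-trivial stage shows that $\mathbb{P}$ is $wt(\kappa)$-distributive (the lower part is trivial and the upper part is $wt(\kappa)^+$-closed).

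Let $G$ be $\mathbb{P}$-generic over $V$. Since $\mathbb{P}$ is $wt(\kappa)$-distributive, Lemma~\ref{Distributive forcing and Shelah cardinals} gives that $\kappa$ is still Shelah in $V[G]$. Suppose for contradiction that some $\lambda>wt(\kappa)$ is Mahlo in $V[G]$. Because $\mathbb{P}$ adds no new bounded subsets to $\lambda$ (its first non-trivial stage below $\lambda$ is $<\lambda$-distributive by closure of the tail and the fact that everything beyond stage $\lambda$ is $\lambda^+$-closed), $\lambda$ was already inaccessible, and in fact Mahlo, in $V$; hence $\mathbb{Q}_\lambda$ appears in the iteration and $G$ induces a club $C\subseteq\lambda$ of $V$-singular cardinals. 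No cardinal $\mu<\lambda$ acquires a new cofinality (nothing is added below $\lambda$), so $C$ consists of $V[G]$-singulars, contradicting the Mahloness of $\lambda$ in $V[G]$.

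The main technical obstacle is the class-forcing bookkeeping: verifying that the reverse-Easton iteration preserves ZFC and is genuinely $wt(\kappa)$-distributive (as opposed to merely distributive at each stage), and justifying the standard factoring $\mathbb{P}\cong\mathbb{P}_{\leq\lambda}*\dot{\mathbb{P}}_{>\lambda}$ with the tail sufficiently closed. All of this is routine reverse-Easton technology and does not require new ideas beyond the closure of each $\mathbb{Q}_\lambda$ and the use of Easton supports.
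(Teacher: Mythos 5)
Your construction is the same as the paper's: shoot a club of singular cardinals through every Mahlo $\lambda>wt(\kappa)$ via a reverse Easton iteration, observe the whole forcing is sufficiently distributive, and quote Lemma~\ref{Distributive forcing and Shelah cardinals} to preserve Shelahness. So the architecture is fine, but two of your justifications are false as stated. First, $\mathbb{Q}_\lambda$ is \emph{not} $wt(\kappa)^+$-closed, and ``$<\lambda$-strategically closed'' does not imply any actual closure beyond countable: if $\mu$ is inaccessible (and there are many such $\mu<\kappa<wt(\kappa)$, since $\kappa$ is measurable), the end-extension chain $c_\alpha=\{\aleph_{\omega\beta}:1\le\beta\le\alpha+1\}$, $\alpha<\mu$, is decreasing of length $\mu$ with suprema converging to $\mu$, and it has no lower bound because a lower bound would have to be closed and hence contain the regular cardinal $\mu$. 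So your factoring argument should not claim the upper part is $wt(\kappa)^+$-closed; what is true, and what you already noted, is $<\lambda$-strategic closure of each iterand (player II keeps her maxima above the length of the game, so limits of plays have small cofinality and are singular cardinals), and the Easton-support iteration of these is $wt(\kappa)$-strategically closed, which gives the $wt(\kappa)$-distributivity you need --- this is exactly the route the paper takes. Second, your claim that $\mathbb{P}$ ``adds no new bounded subsets of $\lambda$'' is false: at every Mahlo cardinal in the interval $(wt(\kappa),\lambda)$ the iteration adds a new club. The fact you actually need --- that $\lambda$ Mahlo in $V[G]$ implies $\lambda$ Mahlo in $V$ --- holds for a simpler reason: regularity is downward absolute, every $V$-club is a $V[G]$-club, so a set of $V[G]$-regulars stationary in $V[G]$ is a set of $V$-regulars stationary in $V$ (and strong limitness in $V[G]$ passes down since $\lambda$ remains a cardinal). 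With those two repairs (both available from material you already wrote down), your proof coincides with the paper's; likewise, once $\lambda$ is Mahlo in $V$, the generic club $C$ of $V$-singulars stays a club of non-regular ordinals in $V[G]$, killing Mahloness there, as you say.
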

\begin{proof}
Let
\[
\MPB= \langle \langle    \MPB_\l: \l > wt(\k)      \rangle, \langle  \lusim{\MQB}_\l: \l > wt(\k)     \rangle\rangle
\]
be the reverse Easton iteration of forcing notions such that for each  ordinal $\l > wt(\k)$,
 \begin{itemize}
 \item $\Vdash_l$``$\lusim{\MQB}_\l$  is the forcing notion for shooting a club of singular cardinals through $\l,$ by its approximations of size $<\l$'', if $\l$
 is a Mahlo cardinal,
 \item $\Vdash_l$``$\lusim{\MQB}_\l$ is the trivial forcing'', otherwise.
 \end{itemize}
 For each ordinal   $\l > wt(\k), \Vdash_\l$``$\lusim{\MQB}_\l$ is $\l$-strategically closed'' and if $\l$ is Mahlo, then forcing with $\MPB_{\l+1}=\MPB_\l* \lusim{\MQB}_\l$
 makes $\l$ an inaccessible non-Mahlo cardinal.  As the full forcing $\PP$ is $wt(\k)$-strategically closed and hence $\l$-distributive, by Lemma 2.4, $\k$ remains a Shelah cardinal in $V^\PP.$ It is also clear that in the extension by $\PP,$ there are no Mahlo cardinals above $wt(\k).$
\end{proof}
In fact we can kill all inaccessible cardinals above $wt(\k)$ preserving the Shelahness of $\k.$ To do this, it suffices to add a club $C$ of singular cardinals above $wt(\k),$ and then make $2^{\alpha^{++}}=\alpha_*^+,$ where $\alpha < \alpha_*$ are two successive points in C.
To be more precise, let $\MPB$ be the following class forcing notion for adding a club of singular cardinals above $wt(\k).$
Conditions in $\MPB$ are club sets $c$ such that $\min(c)=wt(\kappa)^{+\omega}$ ordered by end extension. The forcing notion $\MPB$
is $\l$-distributive for all cardinals $\lambda$ and hence forcing with it does not add any new sets. Further, if $G$ is $\MPB$-generic over $V$
and $C= \bigcup_{c \in G}c,$ then $C$ is a club of singular cardinals above $wt(\kappa)$ with $\min(C)=wt(\kappa)^{+\omega}$. Force over
$V[G]$ with the forcing notion $\MQB$ which is the Easton support product of forcing notions $\Add(\alpha^{++}, \alpha_*^+),$ where $\alpha < \alpha_*$
are successive points in $C$. Let $H$ be $\MQB$-generic over $V[G]$. Then there are no inaccessible cardinals in  $V[G][H]$ above $wt(\k)$. To see this suppose $\l$
is an inaccessible  cardinal above $wt(\kappa).$ Then for some $\a \in C,$ we have $\a^{++} < \l < \a_*$. But then
$2^{\a^{++}} > \l$, which contradicts the strong inaccessibility of $\l.$

\begin{lemma}\label{Closed forcing and Shelah cardinals}
If  $\kappa$ is a Shelah cardinal and $\lambda<wt(\kappa)$ is a regular cardinal, then there is a $\lambda$-closed forcing $\mathbb{P}$ such that $\kappa$ is not Shelah in $V^{\mathbb{P}}$.
\end{lemma}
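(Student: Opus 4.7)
My plan is to split the argument by the position of $\lambda$ relative to $\kappa$, since different $\lambda$-closed forcings handle each regime.

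For the easy case $\lambda < \kappa$, I would take $\PP = \Col(\lambda, \kappa)$, the Lévy collapse. This is genuinely $\lambda$-closed of size $\kappa^{<\lambda}$, and in any generic extension the ordinal $\kappa$ has cardinality $\lambda$, hence is no longer a cardinal and certainly not Shelah.

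For $\lambda \geq \kappa$, the forcing is automatically $\kappa$-closed. When $\lambda = \kappa$ I would take $\PP$ to be the $\kappa$-closed forcing that adjoins a $\kappa$-Suslin tree by approximating subtrees of height $<\kappa$ ordered by end-extension. In $V^\PP$ the tree property fails at $\kappa$, so $\kappa$ is not weakly compact; since every Shelah cardinal is measurable and therefore weakly compact, $\kappa$ is not Shelah in $V^\PP$. For $\lambda > \kappa$, I first use Remark 2.2(1) and (3) to fix $f \in V$ such that every Shelah witness $j$ for $f$ at $\kappa$ satisfies $j(f)(\kappa) > \lambda + \omega$; such $f$ exists because $\lambda < wt(\kappa) = \sup_{f'} j_{f'}(f')(\kappa)$. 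I would then take $\PP$ to be a $\lambda$-closed forcing that introduces into $V[G]_{\lambda+\omega}\setminus V$ a pathological object $X$ (say, a $\lambda$-Suslin tree, or a non-reflecting stationary subset of $\lambda \cap \cf(\omega)$ at the appropriate cofinality) that cannot coexist with such a Shelah witness in $V[G]$. Supposing for contradiction that $\kappa$ were Shelah in $V[G]$ with witness $j^* : V[G] \to M^*$ for $f$, the fact that $\PP$ is $\kappa^+$-closed lets us invoke Hamkins' gap forcing theorem, giving $j := j^* \restricted V : V \to M^* \cap V$ as a ground-model Shelah witness for $f$; and from $\lambda + \omega < j(f)(\kappa)$ one gets $X \in V[G]_{\lambda+\omega} \subseteq V[G]_{j(f)(\kappa)} \subseteq M^*$. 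A contradiction is then produced by combining the elementarity of $j^*$ with the large cardinal structure between $\lambda$ and $wt(\kappa)$ supplied by Lemma 2.3(2) (the unboundedness of measurable Woodin cardinals below $wt(\kappa)$).

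The main obstacle is the case $\lambda > \kappa$: here $\lambda$ itself need not be weakly compact in $V$, so there is no a priori reflection/tree-property at $\lambda$ whose destruction immediately contradicts $\kappa$ being Shelah in $V[G]$. Fine-tuning the choice of $\PP$ and the pathological object $X$ so that $X \in M^*$ genuinely conflicts with the Shelah property of $\kappa$ at $f$, and then extracting the formal contradiction through the elementarity of $j^*$ applied to the measurable Woodin cardinals lying strictly between $\lambda$ and $wt(\kappa)$, is the most delicate point of the proof.
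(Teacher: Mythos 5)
Your first case ($\lambda<\kappa$, collapse $\kappa$) is fine, but the heart of the lemma is the case $\kappa\le\lambda<wt(\kappa)$, and there your plan has a genuine gap rather than missing details. Adding a ``local pathology'' $X$ of rank below $\lambda+\omega$ (a $\lambda$-Suslin tree, a non-reflecting stationary set) cannot by itself conflict with a Shelah witness for $\kappa$: the witness only has to satisfy $V[G]_{j(f)(\kappa)}\subseteq M^*$, and since $\lambda$ is an arbitrary regular cardinal with no weak compactness or reflection property assumed in $V$, $V[G]$ (and hence $M^*$) containing such an $X$ contradicts nothing. Indeed such tree/stationary-set forcings have size roughly $2^{<\lambda}<wt(\kappa)$ and are of exactly the type that Theorem 5.3 shows can be made harmless, so no ZFC argument of the shape you sketch (gap-forcing restriction of $j^*$ plus ``elementarity applied to the measurable Woodins between $\lambda$ and $wt(\kappa)$'') can close this case; you yourself flag that you do not see how to extract the contradiction, and that is precisely the missing idea. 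A secondary problem: the forcing you propose for $\lambda=\kappa$ (subtrees of height $<\kappa$ under end-extension) is only $\kappa$-\emph{strategically} closed in its standard form --- a condition whose height has uncountable cofinality need not be end-extendible, so either closure or the density arguments fail --- whereas the lemma asks for genuine $\lambda$-closure. Also note your appeal to the gap forcing theorem uses closure \emph{above} $\kappa$, not a gap below $\kappa$, so even that step needs restating.

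The paper's mechanism is global rather than local, and it is the ingredient you are missing. First, by (the proof of) Lemma 2.5 one may pass to a preliminary extension, by a forcing that is sufficiently closed, in which $\kappa$ is still Shelah and there are \emph{no Mahlo cardinals above} $wt(\kappa)$. Then force with $\PP=\Add(\lambda,wt(\kappa)^{+})$, which is $\lambda$-closed. If $\kappa$ were still Shelah afterwards, Lemma 2.3 applied in the extension says that the new witnessing number is a limit of measurable Woodin cardinals, in particular a limit of inaccessibles and hence a strong limit above $2^{\lambda}\geq wt(\kappa)^{+}$; this produces Mahlo cardinals of the extension above $wt(\kappa)$, and since $\Add(\lambda,wt(\kappa)^{+})$ is small relative to any such cardinal, they were already Mahlo before the Cohen forcing --- contradicting the preparation. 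So the contradiction comes from the large-cardinal consequences of Shelahness \emph{above} $wt(\kappa)$ after blowing up $2^{\lambda}$ past $wt(\kappa)$, not from any combinatorial object at $\lambda$ itself; this also handles $\lambda=\kappa$ and $\lambda>\kappa$ uniformly, with no case split needed.
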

\begin{proof}
By Lemma 2.5 we can assume that there are no Mahlo cardinals above $wt(\k).$
Then it is easily seen, using Lemma 2.3, that $\PP=Add(\lambda, wt(\k)^+)$ forces that $\k$ is not a Shelah cardinal.
\end{proof}
\begin{lemma}
Suppose $\kappa$ is a Shelah cardinal and   $wt(\kappa)$-strong. Let $j:V\rightarrow M$ witness $wt(\kappa)$-strength of $\kappa$ with $^\kappa$$M\subseteq M$. Then $M\models$``$ \kappa$ is a Shelah cardinal'', and  there is a normal measure $U$ on $\kappa$ such that $\{\lambda<\kappa: \lambda$ is a Shelah cardinal$\}\in U$.
\end{lemma}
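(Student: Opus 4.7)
The plan is to reduce both conclusions to the single claim that $M\models$``$\kappa$ is a Shelah cardinal''. Once this is established, the second conclusion follows easily. Define $U=\{X\subseteq\kappa: \kappa\in j(X)\}$, the normal ultrafilter on $\kappa$ derived from $j$ (normal since $\crit(j)=\kappa$ and ${}^\kappa M\subseteq M$), and let $A=\{\lambda<\kappa: \lambda$ is a Shelah cardinal$\}$. Then $A\in U$ iff $\kappa\in j(A)$, and by elementarity $j(A)=\{\lambda<j(\kappa): M\models$``$\lambda$ is Shelah''$\}$, so $A\in U$ iff $M\models$``$\kappa$ is Shelah''.

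For the reduced claim, I would fix $f\colon\kappa\to\kappa$ in $M$. Since $V_{\kappa+1}\subseteq V_{wt(\kappa)}\subseteq M$, $f$ actually lies in $V$ and the class of functions $\kappa\to\kappa$ coincides in $V$ and in $M$. Using Shelahness of $\kappa$ in $V$, pick an extender $E\in V_{wt(\kappa)}$ witnessing this for $f$, so $j_E^V\colon V\to N^V=\Ult(V,E)$ has $\crit(j_E^V)=\kappa$ and $V_{j_E^V(f)(\kappa)}\subseteq N^V$. Let $\gamma=j_E^V(f)(\kappa)<wt(\kappa)$; we may arrange $\supp(E)=V_\gamma$. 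Since $V_{wt(\kappa)}\subseteq M$, $E$ lies in $M$, so we can form $j_E^M\colon M\to\Ult(M,E)$. The plan is to show that $E$ also witnesses Shelahness of $\kappa$ with respect to $f$ in $M$. The critical point of $j_E^M$ is $\kappa$ because the derived normal measure $E_\kappa$ is the same $\kappa$-complete ultrafilter on $\kappa$ in $V$ and in $M$.

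The main obstacle is verifying $j_E^M(f)(\kappa)=\gamma$ and $V_\gamma\subseteq\Ult(M,E)$. For the first, ordinals below $j_E^V(f)(\kappa)$ in $N^V$ are represented by functions $g\colon\kappa\to\kappa$ with $g(\alpha)<f(\alpha)$ on a set in $E_\kappa$; all such $g$ lie in $V_\kappa\subseteq M$ and the comparison under $E_\kappa$ is absolute between $V$ and $M$, so the same ordinal predecessors appear in $\Ult(M,E)$, yielding $j_E^M(f)(\kappa)=\gamma$. For the inclusion $V_\gamma\subseteq\Ult(M,E)$, I would invoke the seed/coherence axioms for $E$: for each $a\in V_\gamma$ the canonical seed function representing $a$ in $N^V$ can be taken in $V_{\kappa+1}\subseteq M$, and {\L}o\'s' theorem applied in $M$ then gives $a\in\Ult(M,E)$. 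This is the delicate step, amounting to the absoluteness of the extender ultrapower construction between $V$ and the transitive class $M\supseteq V_{wt(\kappa)}$. With these in hand, $V^M_{j_E^M(f)(\kappa)}=V_\gamma\subseteq\Ult(M,E)$, so $E$ witnesses Shelahness of $\kappa$ with respect to $f$ in $M$, establishing the claim and the lemma.
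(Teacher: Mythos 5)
Your proposal is correct and follows essentially the same route as the paper: the paper dismisses the first conclusion as immediate from $V_{wt(\kappa)}\subseteq M$ (your argument that each witnessing extender $E\in V_{wt(\kappa)}$ lies in $M$ and witnesses Shelahness there is just the detailed version of that remark), and then, exactly as you do, derives the second conclusion from the first via the normal measure $U=\{X\subseteq\kappa:\kappa\in j(X)\}$.
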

\begin{proof}
The first part is trivial as $V_{wt(\k)} \subseteq M.$ Now let $U=\{X \subseteq\k: \k\in j(X)\}.$ Then $U$ is a normal measure on $\k$, and $\{\lambda<\kappa: \lambda$ is a Shelah cardinal$\}\in U$.
\end{proof}
It follows that if $\k$ is a Shelah cardinal, and Shelah cardinals below $\k$ are bounded in $\k,$ then $\k$ is not $wt(\k)$-strong.
\begin{theorem}
Let $\k$ be a Shelah cardinal. The following are equivalent:

$(1)$ $\{wt(\l)<\k: \l <\k$ is a Shelah cardinal$\}$ is unbounded in $\k.$

$(2)$  $\{wt(\l)<wt(\k): \l <wt(\k)$ is a Shelah cardinal$\}$ is unbounded in $wt(\k).$

$(3)$ There exists a Shelah cardinal $\l$ with $\k < \l < wt(\l) < wt(\k).$
\end{theorem}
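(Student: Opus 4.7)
The plan is to establish the equivalence by the cycle $(2) \Rightarrow (3) \Rightarrow (1) \Rightarrow (2)$, using Remarks 2.2(1)--(3) and the reflection properties of the embeddings witnessing Shelahness of $\k$. For $(2) \Rightarrow (3)$ I use a cofinality argument: by Remark 2.2(1), $\cf(wt(\k)) > \k$, while there are at most $\k$ many Shelah cardinals $\mu \le \k$, so the set of their witnessing numbers has cardinality at most $\k$ and is bounded below $wt(\k)$. Since $(2)$ makes $\{wt(\mu) : \mu < wt(\k) \text{ is Shelah and } wt(\mu) < wt(\k)\}$ unbounded in $wt(\k)$, some $\mu$ achieving a sufficiently large $wt(\mu)$ must lie in $(\k, wt(\k))$, and such a $\mu$ is precisely a witness for $(3)$.

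For $(3) \Rightarrow (1)$, I reflect the given $\l$ downward using Shelahness of $\k$. Fix Shelah $\l > \k$ with $wt(\l) < wt(\k)$ and let $\a < \k$ be arbitrary. By Remark 2.2(2), choose $f \colon \k \to \k$ and an embedding $j \colon V \to M$ witnessing Shelahness of $\k$ with respect to $f$ such that $j(f)(\k) > wt(\l) + \om$, so $V_{wt(\l)+\om} \sse M$. A standard rank-by-rank induction then gives $V_\b^M = V_\b$ for all $\b \le wt(\l) + \om$, making the first-order statement ``$\l$ is Shelah with witnessing number $wt(\l)$'' absolute between $V$ and $M$. Hence $M$ satisfies ``$\exists \mu$ Shelah with $\a < \k < \mu < j(\k)$ and $wt^M(\mu) < j(\k)$'', witnessed by $\mu = \l$. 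Applying elementarity of $j$ (with parameter $\k \mapsto j(\k)$) produces a Shelah $\mu < \k$ in $V$ with $\a < \mu$ and $wt(\mu) < \k$; since $\a$ was arbitrary, $(1)$ follows.

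For $(1) \Rightarrow (2)$, given $\a < wt(\k)$ I must produce a Shelah $\mu < wt(\k)$ with $\a < wt(\mu) < wt(\k)$. The case $\a < \k$ is immediate from $(1)$. For $\a \ge \k$, Remark 2.2(1) gives $\sup_f j_f(f)(\k) = wt(\k)$ over minimal witnesses, so I can pick $h \colon \k \to \k$ with $h(\xi) \ge \xi+1$ whose minimal witness $j_h$ satisfies $j_h(h)(\k) > \a$. Define $g \colon \k \to \k$ by $g(\xi) = wt(\mu_\xi) + \om$, where $\mu_\xi$ is the least Shelah cardinal in $(\xi, \k)$ with $wt(\mu_\xi) > h(\xi)$; by $(1)$, $\mu_\xi$ exists above every $\xi$, $g(\xi) < \k$, and $g > h$ pointwise by construction. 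Remark 2.2(3) then lets the minimal witness $j_g \colon V \to M$ for $g$ also witness $h$, with $j_g(h)(\k) = j_h(h)(\k) > \a$. By elementarity of $j_g$ applied to the defining formula of $g$, $j_g(g)(\k) = wt^M(\mu^*) + \om$ where $\mu^* \in (\k, j_g(\k))$ is the least Shelah cardinal in $M$ with $wt^M(\mu^*) > j_g(h)(\k) > \a$. Since $V_{j_g(g)(\k)} \sse M$ contains $V_{wt^M(\mu^*)+\om}$, the absoluteness argument from $(3) \Rightarrow (1)$ delivers a Shelah $\mu^* < wt(\k)$ in $V$ with $\a < wt(\mu^*) < wt(\k)$, completing $(2)$.

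The main obstacle is the $(1) \Rightarrow (2)$ direction, in which the extracted $\mu^*$ must simultaneously satisfy $wt(\mu^*) > \a$ (which is not automatic from $(1)$ alone when $\a \ge \k$) and be Shelah in $V$ rather than merely in $M$. The resolution is the joint use of Remark 2.2(3), which allows one minimal embedding $j_g$ to witness both $g$ and $h$ and thereby transfers the lower bound $j_h(h)(\k) > \a$ into the definition of $\mu^*$ inside $M$, together with the fact that $V_{j_g(g)(\k)} \sse M$ is by construction deep enough to verify $\mu^*$'s Shelahness back in $V$.
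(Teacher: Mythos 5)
Your proposal follows the paper's proof essentially step for step: the same reflection argument for $(1)\Rightarrow(2)$ (a function built from witnessing numbers of Shelah cardinals below $\k$, a single embedding made to witness both functions via Remark 2.2(3), and absoluteness of Shelahness back to $V$ using $V_{j(g)(\k)}\subseteq M$), and the same elementarity pull-back through an embedding with $j(f)(\k)$ above $wt(\l)$ for $(3)\Rightarrow(1)$. The only difference is that you spell out the cofinality argument (using $\cf(wt(\k))>\k$ and that there are at most $\k$ many Shelah cardinals $\le\k$) for $(2)\Rightarrow(3)$, which the paper records as trivial; this is correct.
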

\begin{proof}
$(1) \Rightarrow (2):$ Let $\b < wt(\k).$ Let $f: \k \rightarrow \k$ and $j_f: V \rightarrow M_f \supseteq V_{j(f)(\k)}, crit(j_f)=\k$ be such that $\b<j_f(f)(\k).$ Now define $g: \k \rightarrow \k$ by
\begin{center}
$g(\a)=wt($the least Shelah cardinal above $\max\{\a, f(\a)   \}),$
\end{center}
and let $h(\a)=g(\a)^{+\omega+4}$ \footnote{We defined $h$ from $g$ this way to be able to apply Remark $2.2(3)$. We could define it other ways, as long as Remark $2.2(3)$ applies. The same remark applies at later similar arguments in the paper.}. Let $j_h: V \rightarrow M_h$ witness the Shelahness of $\k$ with respect to $h.$ Then by Remark $2.2(3), j_f(f)(\k)=j_h(f)(\k)$
and we have
\begin{center}
$\b < j_f(f)(\k)=j_h(f)(\k)  < j_h(g)(\k) < j_h(g)(\k)^{+\omega+4} = j_h(h)(\k),$
 \end{center}
 and
 \begin{center}
 $j_h(g)(\k)=wt($the least Shelah cardinal above $\max\{\k, j_h(f)(\k)   \}).$
 \end{center}
So $M_h\models$``there exists a Shelah cardinal, say  $\l$, in the interval $(\b, j_h(g)(\k))$'', and since $M_h \supseteq V_{j_h(h)(\k)}, \l$ is in fact a Shelah cardinal in $V$ and $wt(\l)< wt(\k).$

$(2) \Rightarrow (3):$ is trivial.

$(3) \Rightarrow (1):$ Let $\b<\k.$ Let $f: \k \rightarrow \k$ be such that $f(\a) > \max\{\b, \a\},$ and let $j: V \rightarrow M$ witness the Shelahness of $\k$ with respect to $f$. We may further suppose that $f$ is such that $j(f)(\k) > wt(\l)^{+\omega+4}.$ Now
\begin{center}
$M\models$``$\exists \l (\b < \l < wt(\l) < j(\k)$ and $\l$ is a Shelah cardinal''.
\end{center}
So by elementarily
\begin{center}
$V\models$``$\exists \l (\b < \l < wt(\l) < \k$ and $\l$ is a Shelah cardinal''.
\end{center}
The result follows.
\end{proof}

\section{GCH in the presence of Shelah cardinals}
In this section we show that the existence of Shelah cardinals is consistent with $\GCH$ (if there are any), and in fact we will show that the canonical forcing for $\GCH$ preserves Shelah cardinals.
\begin{definition}
The canonical forcing for $\GCH$ is defined as the reverse Easton iteration of forcings
\begin{center}
$\langle \langle \PP_\gamma: \gamma\in On   \rangle, \langle \lusim{\mathbb{Q}}_\gamma: \gamma\in On   \rangle \rangle$
\end{center}
where at each step $\gamma,$ if $\gamma$ is a cardinal in $V^{\PP_{\gamma}},$ then $V^{\PP_{\gamma}}\models$``$\mathbb{Q}_\gamma=\Add(\gamma^+, 1)$'', and $V^{\PP_{\gamma}}\models$``$\mathbb{Q}_\gamma$ is the trivial forcing notion'' otherwise.
\end{definition}
\begin{theorem}\label{consistency of Shelah and GCH}
The canonical forcing for $\GCH$ preserves all Shelah cardinals.
\end{theorem}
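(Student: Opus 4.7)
The plan is to carry out the familiar Silver-style lifting argument for reverse Easton iterations, keeping careful track of ranks via the witnessing number of $\k$. Fix a Shelah cardinal $\k$ in $V$, and let $G$ be $\MPB$-generic over $V$. Factor $\MPB=\MPB_\k*\dot R$. Since $\k$ is inaccessible (indeed Mahlo), $\MPB_\k$ is $\k$-c.c.\ and has cardinality $\k$, and $\dot R$ is forced to be $\k^+$-closed in $V^{\MPB_\k}$. It suffices to show that for every $f\colon\k\to\k$ in $V[G]$ there exists, in $V[G]$, an extender witnessing the Shelahness of $\k$ with respect to $f$.

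First I would reduce $f$ to the ground model. By $\k^+$-distributivity of $\dot R$, $f\in V[G_\k]$; then, by Lemma~\ref{dominating function lemma} applied to the $\k$-c.c.\ forcing $\MPB_\k$, there is $g\colon\k\to\k$ in $V$ dominating $f$. Setting $h(\a)=g(\a)^{+\om+4}$, Remark 2.2(3) will allow me to transfer Shelah-witnesses for $h$ into Shelah-witnesses for smaller functions. Applying Shelahness of $\k$ in $V$ to $h$ yields $j\colon V\to M$ with $\crit(j)=\k$, ${}^{\k}M\subseteq M$, and $V_\b\subseteq M$, where $\b=j(h)(\k)$. In particular $V_\b=M_\b$, so the iterations $\MPB^V$ and $\MPB^M$ coincide through stage $\b$.

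Next I would construct, in $V[G]$, an $M$-generic filter $G^*$ for $j(\MPB)$ with $j[G]\subseteq G^*$, so that $j$ lifts to $j^*\colon V[G]\to M[G^*]$. In $M$ one has $j(\MPB_\k)=\MPB_\k*\dot T$, where $\dot T$ is a length-$j(\k)$ iteration in $M[G_\k]$. For the stages of $\dot T$ in $[\k,\b)$, which in $M[G_\k]$ agree with the corresponding slice of $\MPB^V$, I would use the matching initial segment of $G$; it is $M[G_\k]$-generic because $V_\b=M_\b$. For stages in $[\b,j(\k))$, the $\dot T$-tail is $\b$-closed in $M[G_\b]$, and in $V[G_\b]$ one can enumerate its $M[G_\b]$-dense subsets (using $V_\b\subseteq M$ and ${}^{\k}M\subseteq M$ to control their number) and diagonally meet them, producing the required $M[G_\b]$-generic. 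A standard Silver-style argument, exploiting the $\k^+$-closure of $\dot R$ and the master condition generated by $j[G_R]$, then handles the $j(\dot R)$-tail and completes $G^*$.

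Since $f(\a)<g(\a)$ for every $\a<\k$, one has $j^*(f)(\k)\le j(g)(\k)<j(h)(\k)=\b$, and therefore $V[G]_{j^*(f)(\k)}\subseteq V[G_{<\b}]_\b\subseteq M[G_{<\b}]\subseteq M[G^*]$, using $V_\b=M_\b$ at the middle containment. The extender derived from $j^*$ with support $V[G]_{j^*(f)(\k)}$ therefore lies in $V[G]$ and witnesses the Shelahness of $\k$ in $V[G]$ with respect to $f$. The main obstacle will be the construction of the $M$-generic for the $\dot T$-stages in $[\b,j(\k))$: the relatively modest closure ${}^{\k}M\subseteq M$ must be combined with $V_\b\subseteq M$ and with the closure of the tail forcing in $M$, and the choice $h(\a)=g(\a)^{+\om+4}$ (via Remark 2.2(3)) is precisely what guarantees that $\b$ is large enough to cover the support of the final Shelah-witnessing extender.
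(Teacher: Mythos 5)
Your overall plan (dominate $f$ in $V$, lift through the iteration, derive an extender) is the right shape, but it breaks down at exactly the two points where the paper's proof does its real work. First, the claim that ``$V_\b=M_\b$, so the iterations $\MPB^V$ and $\MPB^M$ coincide through stage $\b$'' is unjustified \emph{at $\b$ itself}, and your choice $h(\a)=g(\a)^{+\om+4}$ makes it worse: then $\b=j(h)(\k)$ is a successor cardinal of $M$, possibly not even a cardinal (certainly not inaccessible) in $V$, so the Easton-support/limit condition at stage $\b$ is computed differently in $V$ and in $M$, and $\MPB_\b$ need not equal $\MPB^M_\b$ even though $\MPB_\gamma=\MPB^M_\gamma$ for $\gamma<\b$. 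Consequently ``use the matching initial segment of $G$'' for the $M$-side up to $\b$, and the containment $V[G_{<\b}]_\b\subseteq M[G_{<\b}]$, do not follow. The paper flags precisely this problem and repairs it with the Claim inside the proof of Theorem 3.2: $h$ is chosen to take inaccessible values and the embedding is re-derived from a witness for $h(\a)+\om+2$, so that $\delta=j(h)(\k)$ is inaccessible in $V$ and $\MPB_\delta=\MPB^M_\delta$. Remark 2.2(3) is about transferring witnesses between functions; it does nothing to secure the $V$-inaccessibility of $j(h)(\k)$.

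Second, the construction of the $M[G_\b]$-generic for the stages in $[\b,j(\k))$ cannot be done by the naive ``enumerate the $M[G_\b]$-dense sets and diagonally meet them''. The hypotheses you cite do not control the number of dense sets: $V_\b\subseteq M$ makes $M$ \emph{larger}, and ${}^{\k}M\subseteq M$ only gives closure of $M[G_\b]$ under $\k$-sequences from $V[G_\b]$, so a diagonal construction carried out in $V[G_\b]$ can pass limit stages of cofinality $\le\k$ only, i.e.\ it can meet at most $\k^+$-many dense sets; meanwhile nothing bounds the number of $M[G_\b]$-dense subsets of $\MPB^M_{[\b,j(\k))}$ by $\k^+$ (already the sets $D_\xi=\{p:\xi\in\supp(p)\}$ for $\xi<j(\k)$ give $|j(\k)|$-many, and $\GCH$ is \emph{not} assumed in $V$, it is what we are forcing). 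This is exactly why the paper passes to the hull $N$, the transitive collapse of $\{j(g)(\k,\delta):g\in V\}$, with factor map $k:N\to M$, and to the term forcing $\RR_0$: after stage $\k$ one has $2^\k=\k^+$ and $|i(\k)|=\k^+$ in $V[G_{<\k}*G(\k)]$, so there are only $\k^+$-many antichains to meet, $\RR_0$ is genuinely $\k^+$-closed in $V[G_{<\k}*G(\k)]$, and the resulting generic $H_0$ is transferred to $M$ along $k^*$ using the $i(\k)$-c.c. Finally, your ``master condition generated by $j[G_R]$'' for the tail cannot work: the canonical $\GCH$ forcing is a class-length iteration, so $j[G_R]$ is a proper class and there is no master condition, and $M$ is only $\k$-closed, so even set-sized pieces such as $j[G(\k)]$ need not lie in $M$. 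The paper never lifts $j$ through the $V$-side tail at all: it derives an extender $E$ from the partial lift $j^*:V[G_{<\k}]\to M[G_{<\delta}*G^M_{[\delta,j(\k))}]$, forms the ultrapower $i^*:V[G_{<\delta}]\to N^*$ of the larger model $V[G_{<\delta}]$, and shows that the pointwise image $i^*[G_{\text{tail}}]$ generates an $N^*$-generic filter because every dense set in $N^*$ has the form $i^*(g)(s)$ and $\PP_{\text{tail}}$ is $\delta^+$-closed in $V[G_{<\delta}]$. Without these devices (or equivalents), your outline does not yield the lift.
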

\begin{proof}
Let $G$ be $\PP$-generic over $V$, and let $f\in V[G], f: \k \rightarrow \k.$ As we can write $\PP=\PP_\k*\lusim{\PP}_{[\k, \infty)},$ where $\PP_\k$ is $\k$-c.c. and $\Vdash_{\PP_\k}$``$\lusim{\PP}_{[\k, \infty)}$ is $\k^+$-closed'', it follows from Lemma 1.1 that there is $g\in V, g: \k \rightarrow \k$ which dominates $f$. Define $h: \k \rightarrow \k$ by
\begin{center}
$h(\a)=$the least inaccessible cardinal above $\max\{\a, g(\a)  \}.$
\end{center}
Since $\k$ is a Shelah cardinal in $V$ and $h\in V,$ there exists an elementary embedding $j: V \rightarrow M$ with $crit(j)=\k$ such that $^{\k}M \subseteq M$ and $V_{j(h)(\k)}\subseteq M.$ Let $\delta=j(h)(\k).$ If $\gamma<\delta$ is a limit ordinal, then $\PP^M_\gamma=\PP_\gamma$ (where $\PP^M=j(\PP)$), however $\PP_\delta$ need not be the same as $\PP^M_\delta.$ This is because though $\delta$ is inaccessible in $M$, $\delta$ need not be inaccessible in $V$.
\begin{claim}
We can take $j$ so that $\delta$ is inaccessible in $V$.
\end{claim}
\begin{proof}
Let $\bar{h}(\a)=h(\a)+\omega+2,$ and let $i: V \rightarrow N$ witness the Shelahness of $\k$ with respect to $\bar{h}.$ Let $E$ be the $(\k, |V_{i(h)(\k)}|^+)$-extender derived from $i$ and let $j_E: V \rightarrow M_E \cong Ult(V, E)$ be the ultrapower embedding so that $crit(j_E)=\kappa$, $^\kappa$$M_E \subseteq M_E$
 and $V_{i(h)(\kappa)} \subseteq M_E.$ Then  $i(h)(\kappa)$ is inaccessible. So it suffices to take $j=j_E.$
\end{proof}
Thus let's assume from the beginning that our $j$ has this property. It follows that $\PP^M_\delta=\PP_\delta.$ Let
\begin{center}
$\PP^M_{j(\k)}=\PP_\k*\lusim{\Add}(\k^+, 1)* \lusim{\PP}_{(\k, \delta)}*\lusim{\PP}^M_{[\delta, j(\k))},$
\end{center}
where $\PP_{(\k, \delta)}$ is the forcing between $\k$ and $\delta.$ Also let $G=G_{<\k}*G(\k)*G_{(\k,\delta)}*G_{\text{tail}}$ correspond to
\begin{center}
$\PP=\PP_\k*\lusim{\Add}(\k^+, 1)* \lusim{\PP}_{(\k, \delta)}*\lusim{\PP}_{\text{tail}}.$
\end{center}
Let $i:V \rightarrow N$ be given from the transitive collapse of $\{j(g)(\k, \delta): g\in V \}$ and let $k: N \rightarrow M$ be the factor map, so that we have the following commutative diagram:
\begin{center}

\begin{align*}
\begin{diagram}
\node{V}
        \arrow{e,t}{j}
        \arrow{s,l}{i}
        \node{M}
\\
\node{N}
         \arrow{ne,b}{k}
\end{diagram}
\end{align*}

\end{center}
Note that $\Add(\k^+, 1)$ is the same in all three models and $crit(k)>\k^+,$ so we can extend $k$ to
\begin{center}
$k^*: N[G_{<\k}*G(\k)] \rightarrow M[G_{<\k}*G(\k)].$
\end{center}
Let $\delta_0=i(h)(\k),$ and note that $k(\delta_0)=\delta.$ Also let
\begin{center}
$\PP^N_{i(\k)}=\PP_\k*\lusim{\Add}(\k^+, 1)* \lusim{\PP}^N_{(\k, \delta_0)}*\lusim{\PP}^N_{[\delta_0, i(\k))},$
\end{center}
where $\PP^N=i(\PP).$ Let:
\begin{enumerate}
\item $\RR\in M[G_{<\k}*G(\k)]$ be the term forcing associated with $\PP^M_{[\delta, j(\k))}$ with respect to $\PP_{(\k, \delta)}; \RR=\PP^M_{[\delta, j(\k))}/\PP_{(\k, \delta)}.$
\item $\RR_0\in N[G_{<\k}*G(\k)]$ be the term forcing associated with $\PP^N_{[\delta_0, i(\k))}$ with respect to $\PP^N_{(\k, \delta_0)}; \RR_0=\PP^N_{[\delta_0, i(\k))}/\PP^N_{(\k, \delta_0)}.$
\end{enumerate}
Then $k^*(\RR_0)=\RR.$ Also note that
\begin{enumerate}
\item $N[G_{<\k}*G(\k)]\models$``$2^\k=\k^+$'',
\item $N[G_{<\k}*G(\k)]\models$``$\RR_0$ is $i(\k)$-c.c. of size $i(\k)$'',
\item $V[G_{<\k}*G(\k)]\models$``$\RR_0$ is $\k^+$-closed and  $|i(\k)|=\k^+$'',
\item $V[G_{<\k}*G(\k)]\models$``$^{\k}N[G_{<\k}*G(\k)] \subseteq N[G_{<\k}*G(\k)].$
\end{enumerate}
So there exists $H_0\in V[G_{<\k}*G(\k)]$ such that $H_0$ is $\RR_0$-generic over $N[G_{<\k}*G(\k)].$ Using $k^*,$ we can find $H_1$ which is $\RR$-generic over $M[G_{<\k}*G(\k)].$ Let
\begin{center}
$G^M_{[\delta, j(\k))}=\{\lusim{x}[G_{<\k}*G(\k)*G_{(\k, \delta)}]: \lusim{x}\in H_1    \}.$
\end{center}
$G^M_{[\delta, j(\k))}\in V[G_{<\delta}]$ and it is $\PP^M_{[\delta, j(\k))}$-generic over $M[G_{<\delta}].$ It follows that we can lift $j$ to
\begin{center}
$j^*: V[G_{<\k}] \rightarrow M[G_{<\delta}*G^M_{[\delta, j(\k))}].$
\end{center}
Let $E$ be the $(\k, j(h)(\k))$-extender derived from $j^*$. Then $E\in V[G_{<\delta}]$ and $E$ is in fact an extender over $V[G_{<\delta}];$ this is because forcing with $\Add(\k^+, 1)*\lusim{\PP}_{(\k, \delta)}$ does not add any new subsets of $\k$ over $V[G_{<\k}].$ So let
\begin{center}
$i^*: V[G_{<\delta}] \rightarrow N^* \cong Ult(V[G_{<\delta}], E)$
\end{center}
be the ultrapower embedding. We have
\begin{enumerate}
\item $(V_{i^*(\k)})^{ N^*} = (V_{i^*(\k)})^{Ult(V[G_{<\delta}], E)},$
\item $ (V_{j(h)(\k)})^{Ult(V[G_{<\delta}], E)}= (V_{j(h)(\k)})^{M[G_{<\delta}*G^M_{[\delta, j(\k))}]},$
\item $i^*(h)(\k)=j(h)(\k).$
\end{enumerate}
so we can conclude that $N^* \supseteq (V_{i^*(h)(\k)})^{V[G_{<\delta}]}.$ Let $i^*(\PP_{\text{tail}})=\PP^{N^*}_{\text{tail}}$ and let $G^{N^*}_{\text{tail}}$ be the filter generated by $i^*[G_{\text{tail}}].$ We show that $G^{N^*}_{\text{tail}}$ is $\PP^{N^*}_{\text{tail}}$-generic over $N^*$. So let $D\in N^*$ be dense open in $\PP^{N^*}_{\text{tail}}.$ Then $D=i^*(g)(s)$ for some $s\in [j(h)(\k)]^{<\omega}$ and $g\in V[G_{<\delta}], g: [\k]^{|s|} \rightarrow V[G_{<\delta}]$. We may suppose that $g(t)$ is dense open in $\PP_{\text{tail}},$ for all $t\in [\k]^{|s|}.$ But
\begin{center}
$V[G_{<\delta}]\models$``$\PP_{\text{tail}}$ is $\delta^+$-closed'',
\end{center}
so
\begin{center}
$V[G_{<\delta}]\models$``$\bigcap_{t}g(t)$ is dense open in $\PP_{\text{tail}}$''.
\end{center}
It follows that $G_{\text{tail}} \cap \bigcap_{t}g(t) \neq \emptyset,$ which implies $G^{N^*}_{\text{tail}} \cap D \neq \emptyset.$ So we can lift $i^*$ to get
\begin{center}
$i^{**}: V[G_{<\delta}][G_{\text{tail}}] \rightarrow N^*[G^{N^*}_{\text{tail}}].$
\end{center}
This means we have
\begin{center}
$i^{**}: V[G] \rightarrow N^*[G^{N^*}_{\text{tail}}],$
\end{center}
which is also definable in $V[G].$ Further
\begin{center}
$(V_{j(h)(\k)})^{V[G]}=(V_{i^*(h)(\k)})^{V[G]} \subseteq N^*[G^{N^*}_{\text{tail}}].$
\end{center}
Hence $i^{**}$ witnesses $\k$ is a Shelah cardinal in $V[G]$ with respect to $f$. as $f$ was arbitrary, we can conclude that $\k$ is a Shelah cardinal in $V[G],$ and the theorem follows.
\end{proof}

\section{Easton's function in the presence of Shelah cardinals}
In \cite{menas}, Menas showed using a master condition argument that locally definable
(see Definition 6.1 below) Easton functions $F$ can be realized, while
preserving supercompact cardinals. Firedman and Honzik \cite{friedman-honzik} proved the same result for strong cardinals, and  Cody \cite{cody} proved an analogous result for Woodin cardinals. Cody and independently Honzik  \cite{honzik}, asked if it is possible to prove the same result for Shelah cardinals. In this section we provide a (partial) solution to their question.

Recall that an Easton function is a definable class function $F: \REG \rightarrow \CARD$ satisfying:
\begin{enumerate}
\item $\k < \l \Rightarrow F(\k)\leq F(\l),$
\item $cf(F(\k))>\k.$
\end{enumerate}
\begin{definition}
An Easton function $F$ is said to be locally
definable if the following condition holds:

There is a sentence $\psi$  and a formula $\phi(x, y)$ with two free variables such that
$\psi$  is true in $V$ and for all cardinals $\gamma,$ if $H(\gamma)\models$``$\psi$'', then $F[\gamma] \subseteq \gamma$ and
\begin{center}
$\forall \a, \b \in \gamma$ $(F(\a)=\b \Leftrightarrow H(\gamma)\models$``$\phi(\a, \b)$''$  ).$
\end{center}
\end{definition}
\begin{theorem}
$(\GCH)$ Assume $F$ is a locally definable Easton function and let $\k$ be a Shelah cardinal such that $H(\k)\models$``$\psi$''. Then there is a cofinality preserving forcing notion $\PP$ such that $V^{\PP}$ realizes $F$ and $\k$ remains a Shelah cardinal in $V^{\PP}$.
\end{theorem}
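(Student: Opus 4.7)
The plan is to follow the template of Theorem 3.2 (the canonical GCH forcing), adapting the construction to the Easton iteration realizing $F$. One genuinely new complication arises at the critical stage and is addressed by a master-condition argument.

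Define $\PP$ to be the reverse Easton iteration $\ordered{\ordof{\PP_\gamma}{\gamma\in\On}, \ordof{\lusim{\QQ}_\gamma}{\gamma\in\On}}$, where $\forces_{\PP_\gamma}$``$\lusim{\QQ}_\gamma = \Add(\gamma, F(\gamma))$'' if $\gamma$ is a regular cardinal in $V^{\PP_\gamma}$, and $\lusim{\QQ}_\gamma$ is trivial otherwise. The usual Easton arguments (splitting $\PP = \PP_\gamma * \lusim{\PP}_{\geq\gamma}$ with $|\PP_\gamma|\leq\gamma$ and with $\lusim{\PP}_{\geq\gamma}$ being $\gamma^+$-closed in $V^{\PP_\gamma}$, combined with GCH) yield cofinality preservation and $V^\PP\satisfies 2^\gamma = F(\gamma)$ for every regular $\gamma$. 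The substance of the proof is the preservation of the Shelahness of $\kappa$.

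Let $G$ be $\PP$-generic and $f\func\kappa\to\kappa$ lie in $V[G]$. By Lemma 1.1, applied to $\PP = \PP_\kappa * \lusim{\PP}_{\geq\kappa}$ (with $\PP_\kappa$ being $\kappa$-c.c.), pick $g\in V$ dominating both $f$ and $F\restricted\kappa$; the latter takes values below $\kappa$ by local definability, since $H(\kappa)\satisfies\psi$. Define $h(\alpha):=$ the least strongly inaccessible $\eta>g(\alpha)$ with $H(\eta)\satisfies\psi$, possible by reflection of $\psi$ below $\kappa$. Applying the Shelahness of $\kappa$ to a suitable boost $\bar h$ of $h$ and invoking the extender trick of Claim 3.3 inside the proof of Theorem 3.2, one obtains $j\func V\to M$ with $\crit(j)=\kappa$, $V_\delta\sse M$, $\delta:=j(h)(\kappa)$ inaccessible in $V$, and $H(\delta)\satisfies\psi$. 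Local definability then yields $F\restricted\delta = F^M\restricted\delta$, so $\PP_\delta = \PP^M_\delta$; in particular the critical-stage forcing $\QQ_\kappa = \Add(\kappa,F(\kappa))$ is computed identically in $V$ and $M$. Decompose $\PP_\delta = \PP_\kappa * \lusim{\QQ}_\kappa * \lusim{\PP}_{(\kappa,\delta)}$ and $\PP^M_{j(\kappa)} = \PP_\delta * \lusim{\PP}^M_{[\delta,j(\kappa))}$. Let $i\func V\to N$ be the ultrapower by the $(\kappa,|V_\delta|^+)$-extender derived from $j$, and $k\func N\to M$ the factor map; by the choice of $h$, $\crit(k)>F(\kappa)$, so $k$ fixes $\QQ_\kappa$ and lifts to $k^*\func N[G_{<\delta}]\to M[G_{<\delta}]$. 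Exactly as in Theorem 3.2, use the $\kappa^+$-closure of the associated term forcings and size-$\kappa^+$ enumeration of their dense sets in $N$ to construct $H_0\in V[G_{<\delta}]$ generic for $\RR_0 = \PP^N_{[\delta_0,i(\kappa))}/\PP^N_{(\kappa,\delta_0)}$, transport via $k^*$ to an $\RR$-generic $H_1$, and thereby obtain a $\PP^M_{[\delta,j(\kappa))}$-generic $G^M_{[\delta,j(\kappa))}\in V[G_{<\delta}]$ over $M$.

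The only genuinely new point, absent from Theorem 3.2, is that $\QQ_\kappa = \Add(\kappa,F(\kappa))$ adds fresh subsets of $\kappa$, so the partial lift $j^*\func V[G_{<\kappa}]\to M[G_{<\delta}*G^M_{[\delta,j(\kappa))}]$ (well-defined as in Theorem 3.2) does not by itself yield an extender measuring all of $V[G_{<\delta}]$. To remedy this, further lift $j^*$ through the stage-$j(\kappa)$ forcing $\Add(j(\kappa),j(F(\kappa)))^M$ of $j(\PP)$ via the master condition
\[
p^* := \bigcup j[G(\kappa)],
\]
which is a partial function of cardinality $\leq F(\kappa)<j(\kappa)$ in $M$, hence a legitimate condition of $\Add(j(\kappa),j(F(\kappa)))^M$. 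Constructing an $M[G_{<\delta}*G^M_{[\delta,j(\kappa))}]$-generic filter below $p^*$, inside $V[G]$, by enumerating the relevant dense sets and descending through the closure of the forcing beneath $p^*$, is the technical heart of the argument and the main obstacle: one must verify, using GCH-controlled cardinal arithmetic in $M$ together with the closure properties of $V[G_{<\delta}]$, that these dense sets can indeed be enumerated and met. Once a lift $j^{**}\func V[G_{<\delta}]\to M[G^{**}]$ is produced, derive from it the extender $E^{**}$ of length $j(h)(\kappa)$, which now measures every subset of $\kappa$ in $V[G_{<\delta}]$. Form the ultrapower $i^{**}\func V[G_{<\delta}]\to\Ult(V[G_{<\delta}],E^{**})$, and lift through the $\delta^+$-closed tail $\PP_{\text{tail}}$ using the filter generated by $i^{**}[G_{\text{tail}}]$, exactly as at the end of Theorem 3.2. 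The resulting embedding witnesses the Shelahness of $\kappa$ with respect to $f$ in $V[G]$, and since $f$ was arbitrary, $\kappa$ remains Shelah in $V[G]$.
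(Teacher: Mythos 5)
Your plan diverges from the paper at the critical stage, and the two points where it diverges are exactly where it breaks. The paper does not use $\Add(\kappa,F(\kappa))$ at stage $\kappa$; it uses the Friedman--Honzik forcing $\PP^F$, whose stage-$\kappa$ component is $\Sacks(\kappa,F(\kappa))$ times Cohen forcings strictly above $\kappa$, and this choice is forced on you for two reasons. First, your domination step fails: $\PP_\kappa * \lusim{\QQ}_\kappa$ with $\lusim{\QQ}_\kappa=\Add(\kappa,F(\kappa))$ is only $\kappa^+$-c.c.\ (not $\kappa$-c.c.), and $\Add(\kappa,F(\kappa))$ is certainly not $\kappa^+$-distributive, so Lemma 1.1 does not apply; worse, Cohen forcing at $\kappa$ genuinely adds functions $\kappa\to\kappa$ not dominated by any ground-model function, so no repair of the counting argument can recover a ground-model $h$ with $f<h$ pointwise. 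The paper's Claim 4.4 is precisely the $\kappa^\kappa$-bounding (fusion) property of $\Sacks(\kappa,F(\kappa))$ that makes this step legitimate, and it has no analogue for $\Add(\kappa,F(\kappa))$.

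Second, the step you yourself call the technical heart --- lifting through $\Add(j(\kappa),j(F(\kappa)))^M$ below the master condition $p^*=\bigcup j[G(\kappa)]$ --- is not available at the Shelah level. For $p^*$ to be a condition of $M$ (or of the relevant extension of $M$) you need essentially $j\restriction F(\kappa)\in M$, i.e.\ $j[F(\kappa)]\in M$; a Shelah witness is an extender ultrapower closed only under $\kappa$-sequences, and nothing in the construction gives closure under $F(\kappa)\geq\kappa^+$-sequences. This is Menas's supercompactness argument, and the lack of such closure is exactly why Friedman--Honzik had to replace Cohen by Sacks at the critical stage for strong cardinals (and why the paper, and Cody for Woodin cardinals, follow them): with Sacks at $\kappa$, the stage-$j(\kappa)$ generic is generated from $G(\kappa)$ and $j$ by a fusion/tuning-fork argument with no master condition at all. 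Even granting $p^*\in M$, your dense-set count is off: the forcing $\Add(j(\kappa),j(F(\kappa)))^M$ has on the order of $|j(\kappa)|\geq\delta$ many dense sets lying in $M[\dots]$, while from the outside you only have $\kappa^+$-closure; the ``two-generator hull'' trick of Theorem 3.2 that matches $\kappa^+$ dense sets to $\kappa^+$-closure lives in the small model $N$ and does not apply to dense sets of the stage-$j(\kappa)$ forcing over $M$. (Your factor map claim is also inconsistent with this fix: taking $N$ to be the ultrapower by the full $(\kappa,|V_\delta|^+)$-extender makes $i(\kappa)>\delta$, destroying the size-$\kappa^+$ enumeration you invoke.) Also note the paper deliberately arranges the reflection point $\beta=j(h^*)(\kappa)$ to be \emph{singular}, so that the stage-$\beta$ forcing of $j(\PP^F)$ in $M$ is trivial there; your insistence on an inaccessible $\delta$ serves no purpose once the critical-stage forcing is Sacks. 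In short, the statement is correct, but the proof must go through $\PP^F$ and the Friedman--Honzik lifting (Sacks fusion at $\kappa$, term-forcing transfer above), not through a Cohen iteration with a master condition.
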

\begin{remark}
\begin{enumerate}
\item In \cite{friedman-honzik}, the full strength of $\k$ is used to show that $\k$ is closed under $F$. As a Shelah cardinal $\k$ is not necessarily even $wt(\k)$-strong, their argument can not be applied to show the closure of  $\k$ under $F$, and since this assumption is essential in preserving  large cardinals, we added the extra assumption
$H(\k)\models$``$\psi$'' to guarantee the closure of $\k$ under $F$.

\item We can replace the assumption $H(\k)\models$``$\psi$'' with the apparently weaker assumption $\exists \k \leq \l < wt(\k), H(\l)\models$``$\psi$''. But  using the methods of section 2 (in particular the proof of Theorem 2.8), we can in fact show that the following are equivalent:
\begin{enumerate}
\item $H(\k)\models$``$\psi$''(i.e., $\k \in \mathcal{C}_\psi,$ where $\mathcal{C}_\psi$ is defined in the proof below),
\item $\exists \k \leq \l < wt(\k), H(\l)\models$``$\psi$'' (i.e. $\mathcal{C}_\psi\cap [\k,wt(\k))\neq \emptyset$),
\item $\{\l<\k: H(\l)\models$``$\psi$''$ \}$ is unbounded in $\k$ (i.e., $\mathcal{C}_\psi\cap \k$ is unbounded in $\k$).
\end{enumerate}
\end{enumerate}
\end{remark}
\begin{proof}
The forcing notion $\PP$ is essentially the forcing $\PP^F$ defined in \cite{friedman-honzik}. We refer to \cite{friedman-honzik} for  the definition of $\PP^F$ and its basic properties, and we will apply the definitions and results from it without any mention. Let $G$ be $\PP^F$-generic over $V$. By \cite{friedman-honzik}, the function $F$ is realized in $V[G]$; thus it remains to show that $\k$ remains a Shelah cardinal in $V[G].$
Let $f\in V[G], f: \k \rightarrow \k.$ First, we show that there exists $h\in V, h: \k \rightarrow \k$ which dominates $f$. We need the following (see \cite{friedman-honzik} for the definition of the forcing notion $\Sacks(\k, \l)$).
\begin{claim}
Let $f\in V^{\Sacks(\k, \l)}, f: \k \rightarrow \k.$ Then there exists $g\in V, g: \k \rightarrow \k$ which dominates $f$.
\end{claim}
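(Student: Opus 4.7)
The plan is to exploit the standard fusion property of $\Sacks(\k,\l)$, which yields the $\k$-Sacks property: any $f\colon\k\to\k$ in a $\Sacks(\k,\l)$-extension is contained in a ``slalom'' $S\colon\k\to V$ lying in $V$ with $|S(\a)|<\k$ for all $\a<\k$ and $f(\a)\in S(\a)$. Granted this, one simply sets $g(\a)=\sup S(\a)+1\in V$ and obtains a function $g\in V$ with $g\colon\k\to\k$ pointwise dominating $f$.

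To obtain the slalom, I would first recall from \cite{friedman-honzik} the fusion ordering $\le_\a$ on $\Sacks(\k,\l)$: a condition is essentially an Easton-supported sequence of perfect $\k$-trees, and $q\le_\a p$ means that $q\le p$ and, on the finitely (or boundedly) many coordinates designated by the bookkeeping up to stage $\a$, the trees $q(i)$ agree with $p(i)$ through their $\a$-th splitting level. The key property is that any $\le_\a$-decreasing fusion sequence of length $\k$ admits a lower bound in $\Sacks(\k,\l)$.

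Next, given a name $\dot f$ and a condition $p_0$ forcing $\dot f\colon\k\to\k$, I would build a fusion sequence $\langle p_\a:\a<\k\rangle$ below $p_0$ as follows. At stage $\a$, the collection $T_\a$ of nodes at the $\a$-th splitting level of the (currently active part of) $p_\a$ has cardinality at most $2^{|\a|}<\k$, using inaccessibility of $\k$. For each stem $\sigma\in T_\a$, I shrink the part of $p_\a$ below $\sigma$ to a condition deciding $\dot f(\a)$; performing this simultaneously for all $\sigma\in T_\a$ produces $p_{\a+1}\le_\a p_\a$ together with a set $A_\a\in V$, $|A_\a|<\k$, such that $p_{\a+1}\Vdash\dot f(\a)\in A_\a$. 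At limit stages I take the fusion limit. Setting $p=\lim_\a p_\a$ and $g(\a)=\sup A_\a+1$ gives $g\in V$ with $p\Vdash\forall\a<\k\,\dot f(\a)<g(\a)$, and by a density argument every $f\in V^{\Sacks(\k,\l)}$, $f\colon\k\to\k$, is dominated by some $g\in V$.

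The main obstacle is a bookkeeping one: in $\Sacks(\k,\l)$ one must simultaneously refine perfect trees across many of the $\l$ coordinates, so the definition of the ``$\a$-th splitting level'' and the count $|T_\a|<\k$ need to be set up carefully along the Easton-support structure of the product; this is precisely the content of the fusion lemma for $\Sacks(\k,\l)$ established in \cite{friedman-honzik}, which I would invoke rather than re-prove.
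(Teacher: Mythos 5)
Your proposal is correct and is essentially the paper's own argument: a fusion sequence $\langle \vec{p}_\a : \a<\k\rangle$ in $\Sacks(\k,\l)$ deciding $\lusim{f}(\a)$ into a set $A_\a\in V$ of size $<\k$ at each stage, the Friedman--Honzik generalized fusion lemma to get a lower bound $\vec{q}$, and then $g(\a)=\sup A_\a+1$ with a density argument. The only cosmetic difference is that the paper counts $|A_\a|\leq(2^\a)^\gamma$ for some $\gamma<\k$ (reflecting the product over the coordinates in the bookkeeping set $X_\a$) rather than your $2^{|\a|}$, a point your deferral to the fusion lemma already covers.
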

\begin{proof}
Let $\vec{p}\in \Sacks(\k, \l)$ be such that $\vec{p}\Vdash$``$\lusim{f}: \k \rightarrow \k$'', where $\lusim{f}$ is a name for $f$. Build a fusion sequence $\langle   \vec{p}_\a: \a<\k \rangle$ of conditions in $\Sacks(\k, \l)$ extending $\vec{p}$, a sequence $\langle A_\a: \a<\k \rangle$ of subsets of $\k$ and a sequence $\langle X_\a: \a<\k \rangle$ of subsets of $\l$ such that:
\begin{enumerate}
\item $|A_\a|\leq (2^\a)^\gamma,$ for some $\gamma<\k,$
\item $\vec{p}_\a\Vdash$``$\lusim{f}(\a)\in A_\a$'',
\item $\bigcup_{\a<\k} X_\a=\bigcup_{\a<\k} \supp(\vec{p}_\a),$
\item $\forall \a<\k, \vec{p}_{\a+1} \leq_{\a, X_\a} \vec{p}_\a.$
\end{enumerate}
By the generalized fusion lemma, \cite{friedman-honzik} Fact 2.18, there exists $\vec{q}\in \Sacks(\k, \l),$ extending all $\vec{p}_\a$'s, $\a<\k.$ Define $g: \k \rightarrow \k$ by $g(\a)=\sup A_\a+1.$ Then $g\in V$ and $\vec{q}\Vdash$``$\forall \a<\k, \lusim{f}(\a) < g(\a)$''.
\end{proof}
Then we have
$\PP^F \cong \PP^F_\k*\lusim{\PP^F(\k)}*\lusim{\PP}^F_{\text{tail}},$
where
\begin{center}
$V^{\PP^F_\k}\models$``$\PP^F(\k)=\Sacks(\k, F(\k)) \times \prod_{ \k < \l < i_{\k+1}, \l \text{regular}} \Add(\l, F(\l))$'',
\end{center}
and $i_{\k+1}$ is the $(\k+1)$-th closure point of $F$ (note that $i_\k=\k$).
Then we have
\begin{enumerate}
\item $V^{\PP^F_\k*\lusim{\Sacks}(\k, F(\k))}\models$``$\prod_{ \k < \l < i_{\k+1}, \l \text{regular}} \Add(\l, F(\l))$ is $\k^+$-distributive'',
\item $V^{\PP^F_{i_{\k+1}}}\models$``$\PP^F_{\text{tail}}$ is $\k^+$-distributive''.
\end{enumerate}
So by Lemma 1.1 and Claim 4.4, we can find  $h\in V, h: \k \rightarrow \k$ such that for all $\a<\k, f(\a) < h(\a).$ Since $\psi$ holds in $V$, there exists a club of cardinals $\mathcal{C}_\psi$ such that if $\l\in \mathcal{C}_\psi,$ then $H(\l)\models$``$\psi$''. To see this, let $n$ be a big enough natural number which exceeds the complexity of $\psi$
in the Levy hierarchy of formulas.  By Levy's reflection principle and the fact that the $\Sigma_n$-satisfaction relation is expressible,
\begin{center}
$\mathcal{C}_\psi=\{\l: \l $ is a cardinal, $H(\l) \prec_n V$ and  $H(\l)\models$``$\psi$''  $ \}$
\end{center}
is a proper class. But clearly $\mathcal{C}_\psi$ contains its limit points, so it is in fact a club of cardinals, as required.

 Also by our assumption, $\k\in \mathcal{C}_\psi$, and hence  $\mathcal{C}_\psi\cap \k$ is a club of $\k.$
We show  that  $\mathcal{C}_\psi\cap wt(\k)$ is also  a club of $wt(\k).$ It suffices to show that
$\mathcal{C}_\psi\cap wt(\k)$ is unbounded in $wt(\k)$. Thus assume $\nu < wt(\k)$. Let $f: \k \to \k$  be an increasing continuous function such that $j_f(f)(\k) > \nu.$
Note that the set
\begin{center}
$X= \{\a < \k:$ there are unboundedly many $\beta < f(\alpha)$ with $H(\beta) \models$``$\psi$''  $\}$
\end{center}
 has measure one, so $\k \in j_f(X)$, which implies
\begin{center}
$M_f \models$`` there are unboundedly many $\beta < j_f(f)(\kappa)$ with $H^{M_f}(\beta) \models$``$\psi$''.
\end{center}
 As $H^{M_f}(\beta) = H(\beta),$ for $\beta < j_f(f)(\kappa)$, and $j_f(f)(\k) > \nu,$ we can find $\l$
 with $\nu < \l < j_f(f)(\k)$ and $H(\l)\models$``$\psi$''. Thus $\nu < \l \in \mathcal{C}_\psi\cap wt(\k)$,
 as required.

Let $\mathcal{C}_h$ be the class of closure points of $h$, and note that $\mathcal{C}_h$ is a club of cardinals, and $\mathcal{C}_h\cap \k$ is a club of $\k$. Define $h^*: \k \rightarrow \k$ by
\begin{center}
$h^*(\a)=$the $\omega$-th point of $\mathcal{C}_\psi\cap \mathcal{C}_h$ above $\max\{\a, F(\a), h(\a)  \}.$
\end{center}
Note that for each $\a<\k, h^*(\a)$ is a singular cardinal. Set $h^{\dag}(\a)=h^*(\a)^{++}.$ Since $\k$ is a Shelah cardinal in $V$, and $h^\dag\in V,$ there exists an elementary embedding $j: V \rightarrow M \supseteq H(j(h^\dag)(\k))$ with $crit(j)=\k$ and $^{\k}M \subseteq M.$ It is easily seen that:
\begin{enumerate}
\item $j(h^\dag)(\k)=(j(h^*)(\k))^{++}$,
\item $j(h^*)(\k)=$the $\omega$-th point of $j(\mathcal{C}_\psi \cap \mathcal{C}_h)$ above $\max\{\k, j(F)(\k), j(h)(\k)  \},$
\item $j(\mathcal{C}_\psi)\cap j(h^\dag)(\k)=\mathcal{C}_\psi \cap j(h^\dag)(\k),$ and so $j(h^*)(\k)\in \mathcal{C}_\psi.$
\end{enumerate}
Let $\b=j(h^*)(\k).$ Then $\b>\k$ is a singular cardinal, $H(\b)\models$``$\psi$'' and $j: V \rightarrow M \supseteq H(\b^{++})$.
We show that we can  lift $j$ to some
\begin{center}
$j^*: V[G] \rightarrow M[H] \supseteq H^{V[G]}(\b^+),$
\end{center}
for some $H$ which is $j(\PP^F)$-generic over $M$. The proof is similar to the proof of Theorem 3.17 from \cite{friedman-honzik}, and we present it here
for completeness. For notational simplicity let $\MPB=\MPB^F$ and $\MPB^M=j(\MPB)$.

  We can assume that $\beta^{++} < j(\kappa) < \beta^{+3}$ and that $M= \{ j(l)(a): l: [\kappa^{<\omega} \to V, a \in [\beta^{++}]^{<\omega}   \}$.
Since $\kappa$ is closed under $F$, $j(\kappa)$ is closed under $j(F)$. Moreover, since $j(F)$ is locally definable in $M$ via the formulas $\psi$  and $\phi(x, y)$
and $H^M(\beta)=H^V(\beta)$ , it follows that $H^M(\beta)\models$``$\psi$'',  and consequently $F$ and $j(F)$ are identical on the interval $[\omega, \beta)$;
in particular $\beta$ is closed under $j(F)$. Further we have $\MPB^M_\beta= \MPB_\beta$, and so $G_\beta= G \cap \MPB_\beta$ is generic over
$\MPB^M_\beta.$ As $\MPB_\beta$ is $\beta^{++}$-c.c,  we have $H^{M[G_\beta]}(\beta^{++})=H^{V[G_\beta]}(\beta^{++})$.

Let $i$ and $i^M$ enumerate closure points of $F$ and $j(F)$ respectively and suppose that $\bar{\beta} \leq \beta$ is such that $\beta=i_{\bar \beta}=i^M_{\bar \beta}$.
The singularity of $\beta$ in $M$ implies that the next step of
the iteration, the product $\MPB^M(\beta)$ in $M[G_\beta]$, is trivial at $\beta$, and so $\MPB^M(\beta)$ is the Easton-supported product of Cohen forcings in the interval
$[\beta^+, i^M_{\bar \beta+1})$, where $i^M_{\bar \beta+1}< j(\kappa)$ is the
next closure point of $j(F)$ after $\beta$. Write $\MPB^M(\beta) = \MPB^M(\beta)_1 \times \MPB^M(\beta)_2$, where
\[
\MPB^M(\beta)_1 = (\Add(\beta^+, j(F)(\beta^+)) \times  \Add(\beta^{++}, j(F)(\beta^{++})))_{M[G_\beta]}
\]
and
\[
\MPB^M(\beta)_2=(\prod_{\beta^{++} < \lambda < i^M_{\bar \beta+1}, \lambda \text{~regular}} \Add(\lambda, j(F)(\lambda))~~)_{M[G_\beta]}.
\]
By the remarks before Lemma 3.9 from \cite{friedman-honzik}, we can find $g^+_1 \in V[G]$
which is $\MPB^+(\beta)_1$-generic over $V[G_\beta]$, for some forcing notion $\MPB^+(\beta_1)_1$, such that
$g^+_1 \cap M[G_\beta]$ is $\MPB^M(\beta)_1$-generic over $M[G_\beta].$
Further we have $V[G_\beta] \cap ^{\kappa}M[G_\beta] \subseteq M[G_\beta]$ (see \cite{friedman-honzik}, Lemma 3.14), and by  arguments similar to Lemma 3.9
from \cite{friedman-honzik}, there exists $g^+_2 \in V[G]$
which is $\MPB^+(\beta)_2$-generic over $M[G_\beta]$. By Easton's theorem $g^+_1 \times g^+_2$ is in fact $\MPB^M(\beta)$-generic over $M[G_\beta]$.

Similarly, there exists a
generic for the iteration $\MPB^M$ up to the closure point $j(\kappa)$ (see \cite{friedman-honzik} Lemma 3.15 ).
Now we argue as in \cite{friedman-honzik}.
We first lifting to
the Sacks forcing at $\kappa$  and then to the rest of the forcing
above $\kappa$. This gives us
$j^*: V[G] \rightarrow M[H],$
which is defined in $V[G].$ Note that $M[H]$
captures all subsets of $\beta$ in $V[G]$ and hence $M[H] \supseteq H^{V[G]}(\b^+)$,  as required.

But $j^*(f)(\k) < j^*(h^*)(\k)=j(h^*)(\k)=\b,$ and hence $M[H] \supseteq H^{V[G]}(j^*(f)(\k)).$ Thus $j^*$ witnesses the Shelahness of $\k$ in $V[G]$ with respect to $f$. As $f$ was arbitrary, $\k$ remains a Shelah cardinal in $V[G].$
\end{proof}

\section{An indestructibility result for Shelah cardinals}
In this section we present an indestructibility result for Shelah cardinals, which involves some kind of Prikry type forcing notions.
\begin{definition}
Let $(\PP, \leq, \leq^*)$ be a set with two partial orders so that $\leq^* \subseteq \leq.$
\begin{enumerate}
\item $(\PP, \leq, \leq^*)$ is weakly $\k$-closed, if $(\PP, \leq^*)$ is $\k$-closed.
\item $(\PP, \leq, \leq^*)$ satisfies the Prikry property, if for any $p\in \PP$ and statement $\sigma$ of the forcing language $(\PP, \leq),$ there exists $q\leq^* p$ deciding $\sigma.$
\end{enumerate}
\end{definition}
Note that any $\k$-closed forcing notion can be turned into a weakly $\k$-closed Prikry type forcing notion, simply by setting $\leq^*=\leq.$
In \cite{gitik-shelah}, it is shown that, it is consistent that a strong cardinal $\k$ is indestructible under weakly $\k^+$-closed Prikry type forcing notions.

Given a set of functions $A \subseteq$$^{\k}\k,$ call $\k$ is $A$-Shelah, if for each function $f\in A,$ there is an elementary embedding witnessing the Shelahness of $\k$ with respect to $f$; so $\k$ is a Shelah cardinal iff it is a $^{\k}\k$-Shelah cardinal. At the end of their paper, Gitik and Shelah have claimed that the same argument can be applied to show that if $A=(^{\k}\k)^{V^{\MPB}}\cap V,$ then the fact that a cardinal $\k$ is $A$-Shelah can become indestructible under any weakly $\k^+$-closed Prikry type forcing notions. The next lemma, shows that this is not in fact true.
\begin{lemma}
Suppose $\k$ is a Shelah cardinal. Then there is a weakly $\k^+$-closed Prikry type forcing notion $\PP,$ such that in $V^{\PP}$, there are no measurable cardinals  above $\k;$ in particular $\k$ does not remain $(^{\k}\k)^{V^{\MPB}}\cap V$-Shelah in the generic extension.
\end{lemma}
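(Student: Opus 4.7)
The plan is to construct a weakly $\kappa^+$-closed Prikry type forcing $\MPB$ such that $V^{\MPB} \models$ ``there are no measurable cardinals above $\kappa$''. Once this is achieved, the $A$-Shelah claim will follow easily: the $\kappa^+$-closure of $\leq^*$ together with the Prikry property guarantees that $\MPB$ adds no new functions $\kappa \to \kappa$, so $A = (^\kappa \kappa)^{V^{\MPB}} \cap V = (^\kappa \kappa)^V = (^\kappa \kappa)^{V^{\MPB}}$; hence ``$\kappa$ is $A$-Shelah in $V^{\MPB}$'' is equivalent to ``$\kappa$ is Shelah in $V^{\MPB}$'', and the latter would, by Lemma~$2.3(2)$ applied inside $V^{\MPB}$, produce measurable Woodin cardinals unbounded below $wt^{V^{\MPB}}(\kappa)$---in particular measurable cardinals strictly above $\kappa$---contradicting our construction.

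For the construction I let $\MPB$ be the Easton-supported class iteration of Prikry forcings: at each stage $\lambda$ which is a $V$-measurable cardinal greater than $\kappa$, I fix a normal measure $U_\lambda$ on $\lambda$ and let $\dot{\MQB}_\lambda$ be a name for the Prikry forcing associated with $U_\lambda$; at all other stages $\dot{\MQB}_\lambda$ is trivial. Each such Prikry forcing is a weakly $\lambda$-closed Prikry type forcing whose direct extension $\leq^*_\lambda$ is $\lambda$-closed. The iterated direct extension $\leq^*$ on $\MPB$ is defined stage-wise; since each $\leq^*_\lambda$ is $\kappa^+$-closed and the supports are Easton, $(\MPB,\leq^*)$ is $\kappa^+$-closed, and the Prikry property lifts to $\MPB$ by the standard factor-by-factor argument that exploits the closure of the direct extension at lower stages.

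In $V^{\MPB}$ every $V$-measurable cardinal $\lambda > \kappa$ is equipped with a cofinal $\omega$-sequence from the corresponding Prikry forcing and thus has cofinality $\omega$, so it is no longer measurable; for $\lambda > \kappa$ which is not $V$-measurable, a Levy--Solovay style factorization of $\MPB$ around $\lambda$ (a small initial segment of size less than $\lambda$, together with a sufficiently closed tail) shows that $\lambda$ remains non-measurable in $V^{\MPB}$ as well. Combining this with the argument of the first paragraph finishes the proof.

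The main technical difficulty lies in the class-sized iteration bookkeeping: one must ensure simultaneously (a) $\kappa^+$-closure of $\leq^*$ at limit stages, (b) preservation of the Prikry property through the class iteration, and (c) that no new measurable cardinal above $\kappa$ accidentally appears in $V^{\MPB}$. These are standard but delicate points, and can be handled using the techniques of the Gitik--Shelah iteration of \cite{gitik-shelah}. A cleaner alternative, if one prefers not to iterate Prikry forcings at all, is to take $\MPB$ to be any honest $\kappa^+$-closed class forcing that destroys measurability above $\kappa$ (for instance, a reverse Easton iteration adding non-reflecting stationary subsets of $\gamma \cap \mathrm{cof}(\kappa^+)$ at every $V$-regular $\gamma > \kappa^+$), equipped with $\leq^* = \leq$: this is trivially weakly $\kappa^+$-closed Prikry type, and the argument proceeds identically.
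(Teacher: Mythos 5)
Your reduction of the ``in particular'' clause is fine and matches the paper's intent: $\kappa^+$-closure of $\leq^*$ plus the Prikry property gives no new functions $\kappa\to\kappa$, so $A$-Shelahness in $V^{\MPB}$ is full Shelahness there, and Shelahness yields (by Lemma 2.3(2), applied in $V^{\MPB}$) measurable cardinals above $\kappa$. The gap is in your construction. In the Easton-support class iteration of Prikry forcings at every $V$-measurable $\lambda>\kappa$ with ground-model measures $U_\lambda$, the stage-$\lambda$ forcing is not well defined as you wrote it: already at the second nontrivial stage, $\MPB_\lambda$ has added new bounded subsets of $\lambda$ (Prikry $\omega$-sequences through smaller measurables), hence new subsets of $\lambda$, so $U_\lambda$ is no longer an ultrafilter in $V^{\MPB_\lambda}$. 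Making ``the Prikry forcing associated with $U_\lambda$'' meaningful requires the nontrivial lemma that $U_\lambda$ generates a normal measure in $V^{\MPB_\lambda}$, which is proved in the Magidor/Gitik iteration framework (full support with finitely many non-direct coordinates), not for the Easton-support iteration you declare; ``the standard factor-by-factor argument'' does not cover this, nor the Prikry property of the limit. Second, your ``Levy--Solovay style factorization (a small initial segment of size less than $\lambda$\dots)'' fails exactly at cardinals $\lambda$ that are limits of $V$-measurables, where $|\MPB_\lambda|=\lambda$; ruling out accidental new measurables there needs a separate argument using the Prikry property and the $\leq^*$-closure of the tail. Third, both of your candidates are proper class forcings, so ZFC-preservation needs comment, and in context a class forcing is the wrong kind of counterexample: Lemma 5.2 is paired with Theorem 5.3 to show that the bound ``size $<wt(\kappa)$'' is optimal, for which one wants a set forcing of size about $wt(\kappa)$.

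The paper's proof is a two-liner that avoids all of this: first apply Lemma 2.5 to pass, preserving the Shelahness of $\kappa$, to a model with no Mahlo cardinals above $wt(\kappa)$; then force with the single $\kappa^+$-closed poset $\Add(\kappa^+,wt(\kappa))$ (so $\leq^*=\leq$). This makes $2^{\kappa^+}\geq wt(\kappa)$, so no cardinal in the interval $(\kappa^+,wt(\kappa))$ remains inaccessible, $wt(\kappa)$ itself is singular, and above $wt(\kappa)$ Levy--Solovay applies legitimately because the forcing now really is small relative to those cardinals and there were no Mahlos (hence no measurables) there to begin with. The paper also notes the cardinal- and power-function-preserving variant: the Prikry product of Prikry forcings over the measurables in $(\kappa,wt(\kappa))$, which is a product, not an iteration, so the measure-extension problem above does not arise. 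Your ``cleaner alternative'' with non-reflecting stationary sets and $\leq^*=\leq$ is closer to correct, but it is again class-length; if you localize it (or simply the Cohen forcing) below $wt(\kappa)$ after the Lemma 2.5 preparation, you essentially recover the paper's argument.
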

\begin{proof}
By Lemma 2.5 we can assume that there are no Mahlo cardinals above $wt(\k)$. Then forcing with $\Add(\k^+, wt(\k))$ is as required. In fact we can even do not change the power function and preserve all cardinals if we force with the Prikry product of forcing notions $\PP_{U_\l},$ where $\l$ ranges over all measurable cardinals in $(\k, wt(\k)),$  $U_\l$ is some fixed norml measure on $\l,$ and $\PP_{U_\l}$ is the corresponding Prikry forcing notion.
\end{proof}
Note that if $\GCH$ holds in $V$, then as $cf(wt(\kappa)) > \kappa,$ we have $wt(\kappa)^\kappa=wt(\kappa),$ and hence the forcing notion $\Add(\k^+, wt(\k))$
has size $wt(\kappa)$.
We now show that it is consistent that the full Shelahness
of $\k$ is indestructible under any weakly $\k^+$-closed Prikry type forcing notion of size $< wt(\k)$ (which is optimal by Lemma 5.2).
\begin{theorem}
Suppose $GCH$ holds and $\k$ is a Shelah cardinal. Then there exists a generic extension $V[G]$ of $V$, in which $\k$ remains a Shelah cardinal, and its Shelahness is indestructible under any weakly $\k^+$-closed Prikry type forcing notion of size $< wt(\k).$
\end{theorem}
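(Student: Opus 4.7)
The plan is to adapt the Gitik--Shelah indestructibility argument for strong cardinals \cite{gitik-shelah} to the Shelah setting by building a Laver-style preparation for $\k$.

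First I would produce a Laver-style function $l \func \k \to V_\k$ whose ultrapower images guess, at $\k$, every potential test forcing. Precisely, for every $f \func \k \to \k$ and every weakly $\k^+$-closed Prikry-type forcing $\MQB$ of size $<wt(\k)$, there should be an extender $E$ witnessing Shelahness of $\k$ with respect to $f$, with $\supp(E)=V_{j_E(f)(\k)}$, satisfying $j_E(l)(\k)=\MQB$. The construction is by the standard no-function recursion: if no such $l$ existed, one could extract a counterexample $(\MQB_\a,f_\a)$ at each $\a<\k$ and then derive a contradiction from the Shelahness of $\k$ by choosing a single $f$ dominating all $f_\a$ and an extender aligned as in Remark 2.2(3) so that $j_E(f)(\k)$ exceeds every relevant rank.

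Having fixed $l$, define the preparation $\MPB=\MPB_\k$ as the reverse Easton iteration of length $\k$ in which, at stage $\a<\k$, we force with $l(\a)$ when it is (a name for) a weakly $\a^+$-closed Prikry-type forcing of size $<wt(\a)$ in $V^{\MPB_\a}$, and trivially otherwise. Under $\GCH$, the iteration preserves cofinalities and factors nicely as $\MPB_\a \ast \lusim{\MPB}_{[\a,\k)}$; an argument in the style of Theorem \ref{consistency of Shelah and GCH} (with the end forcing taken trivial) shows that $\k$ remains a Shelah cardinal in $V[G]$, where $G$ is $\MPB$-generic. Now fix in $V[G]$ a weakly $\k^+$-closed Prikry-type forcing $\MQB$ of size $<wt(\k)^{V[G]}$, let $K$ be $\MQB$-generic, and take an arbitrary $f \func \k \to \k$ in $V[G][K]$. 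The Prikry property combined with $\k^+$-closure of $\leq^*$ (iterated $\k$ times) pushes $f$ down to $V[G]$, after which Lemma \ref{dominating function lemma} applied to a suitable factor of $\MPB$ dominates $f$ by some $g \in V$ with $g(\a) \ge \a+1$ for all $\a$. Apply the Laver property to $g$ and (a $\MPB$-name for) $\MQB$ to obtain an extender $E$ witnessing Shelahness of $\k$ with respect to $g$, with $j_E(l)(\k)=\MQB$ and $V_{j_E(g)(\k)}$ large enough to contain $\MQB$, $K$, and the relevant portion of $j_E(\MPB)$ above $\k$. A three-stage lifting of $j_E$ — first through $\MPB$ as in the proof of Theorem \ref{consistency of Shelah and GCH}, then through the stage-$\k$ factor $\MQB$ of $j_E(\MPB)$ using $K$ itself as the generic, and finally through the tail of $j_E(\MPB)$ above $\k$ by a term-forcing construction — produces $j^{**} \func V[G][K] \to N^*$ with $H^{V[G][K]}(j^{**}(g)(\k))\subseteq N^*$; since $g$ dominates $f$, this witnesses Shelahness of $\k$ in $V[G][K]$ with respect to $f$.

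The chief obstacle is the third stage of the lift: the ultrapower target only agrees with $V[G]$ up to $V_{j_E(g)(\k)}$ (this is exactly the weakness of Shelahness relative to strongness), so one cannot appeal to full $V$-agreement in constructing the tail generic. This is handled by inflating $g$ so that $j_E(g)(\k)$ strictly exceeds the ranks of $\MQB$, $K$, a term-forcing representation of the tail of $j_E(\MPB)$, and the master condition needed to lift through $\MQB$ compatibly with the Prikry property; the construction of the tail generic is then carried out entirely inside $V_{j_E(g)(\k)}$, exploiting the $\k^+$-closure of the tail. A secondary difficulty is verifying that the Laver recursion converges using only the information in $V_{j_E(f)(\k)}$ rather than the full ultrapower target, and here Remark 2.2(3) provides the required flexibility to align extenders witnessing Shelahness for related functions.
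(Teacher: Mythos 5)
Your overall architecture (a Laver-style function for Shelah cardinals, a length-$\k$ preparation guided by it, then lifting an embedding chosen for a function dominating the new $f$, with the stage-$\k$ image forcing anticipated to be $\MQB$ so that $K$ itself serves as the stage-$\k$ generic) matches the paper's, and the first two stages of your lift are fine. The genuine gap is your third stage. The tail of $j_E(\MPB)$ beyond stage $\k$ is an iteration whose nontrivial stages are arbitrary weakly $\a^+$-closed \emph{Prikry type} forcings; such forcings (Prikry forcing itself is the standard example) are closed only in the direct extension order $\leq^*$ and typically add new $\omega$-sequences, so the tail is not $\k^+$-closed in $\leq$, and no term-forcing or closure argument can manufacture an honest $M$-generic filter for it. Moreover the tail is an iteration of length about $j(\k)$, of rank far above $j_E(g)(\k)$, so the construction cannot be ``carried out entirely inside $V_{j_E(g)(\k)}$'': the agreement $V_{j_E(g)(\k)}\subseteq M_E$, which is all Shelahness gives you, is of no use for meeting dense sets of a forcing living near rank $j(\k)$. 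Inflating $g$ cannot repair this, since $j_E(g)(\k)<j_E(\k)$ always.

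This is exactly why the paper does not lift through an actual tail generic but runs the Gitik--Shelah machinery \cite{gitik-shelah}: every $\leq^*$-dense open subset of the tail in $M$ has the form $j(l)(a)$, so intersecting over all $a$ (legitimate because the tail is $\leq j(h)(\k)$-weakly closed) yields a family of only $\k^+$ many $\leq^*$-dense sets capturing all of them; one then builds a $\leq^*$-decreasing sequence $\langle r_\a:\a<\k^+\rangle$ meeting this family, and, using the Prikry property (statements $\lusim{a}\in j(\lusim{A})$ are decided by $\leq^*$-extensions), defines the measures $E^*_a$ directly from what conditions in $G*H*G_{\text{tail}}$ force, finally taking the ultrapower of $V[G][H]$ by the derived extender and checking well-foundedness and $j(h)(\k)$-strongness as in \cite{gitik-shelah}. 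For the same reason your preparation should be an Easton support \emph{Prikry} iteration (so its tails have the Prikry property and weak closure), not an ordinary reverse Easton iteration, and even the preservation of Shelahness in $V[G]$ alone needs this technique rather than the closed-tail, term-forcing argument of Theorem~\ref{consistency of Shelah and GCH}. (Minor point: restricting stage-$\a$ forcings to size $<wt(\a)$ is unnecessary and not what the paper does; it only requires $L(\a)=(\lusim{\QQ},\l)$ with $\lusim{\QQ}\in V_\l$.)
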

The proof uses the existence of some  kind of Laver functions for Shelah cardinals. The following can be proved as for supercompact or Strong cardinals
\begin{lemma}
Suppose $\kappa$ is a Shelah cardinal. Then there exists a partial function $L:\k \rightarrow V_\k,$ such that for every $x\in V_{wt(\k)}$ and every $\l<wt(\k)$ with $|tc(x)| <\l,$ there exists $f: \k \rightarrow \k$ and an elementary embedding $j: V \rightarrow M \supseteq H(j(f)(\k))$ with $crit(j)=\k,$ such that $j(f)(\k)>\l$ and $j(L)(\k)=x.$ We may further suppose that $\dom(L)$ just contains inaccessible cardinals, and that for all $\l\in \dom(L), L \upharpoonright \l \subseteq V_\l.$
\end{lemma}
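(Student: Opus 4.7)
The plan is to construct $L$ by transfinite recursion, adapting the classical Laver argument for supercompact or strong cardinals to the Shelah setting, with the bound $wt(\alpha)$ playing the role usually played by $\lambda$-supercompactness or $\lambda$-strength. Fix a definable well-ordering $\lhd$ on pairs $(x,\lambda)$, ordered first by $\lambda$ and then by $x$. At each inaccessible $\alpha < \kappa$, set $L(\alpha)$ equal to the first coordinate of the $\lhd$-least pair $(x,\lambda)$ with $|tc(x)| < \lambda < wt(\alpha)$ (interpreting $wt(\alpha)$ as some fixed default bound when $\alpha$ is not Shelah) such that no $f : \alpha \to \alpha$ and elementary embedding $j : V \to M$ with $crit(j) = \alpha$, ${}^\alpha M \subseteq M$, $M \supseteq H(j(f)(\alpha))$, $j(f)(\alpha) > \lambda$, and $j(L \upharpoonright \alpha)(\alpha) = x$ exist simultaneously; leave $L(\alpha)$ undefined if no such pair exists. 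Because $\kappa$ is a strong limit and $wt(\alpha) < \kappa$ for every Shelah $\alpha < \kappa$, this gives $L : \kappa \to V_\kappa$ with $\dom(L)$ consisting of inaccessibles and $L \upharpoonright \lambda \subseteq V_\lambda$ for every inaccessible $\lambda$.

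To verify the Laver property at $\kappa$, suppose it fails and let $(x^*,\lambda^*)$ be the $\lhd$-least counterexample in $V$, so $\lambda^* < wt(\kappa)$ and $x^* \in V_{wt(\kappa)}$. Using Remark 2.2(2), pick $f_0 : \kappa \to \kappa$ together with a Shelah embedding satisfying $j_0(f_0)(\kappa) > \lambda^*$; then by the enlargement technique of Remark 2.2(3) pass to $f : \kappa \to \kappa$ and $j : V \to M$ witnessing the Shelahness of $\kappa$ with respect to $f$, arranged so that $j(f)(\kappa)$ strictly exceeds not only $\lambda^*$ but also the ranks of all minimal-support extenders in $V$ that could witness the Laver property at $\kappa$ for pairs $\lhd (x^*,\lambda^*)$. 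Since $V_{j(f)(\kappa)} \subseteq M$, the bound $wt^M(\kappa)$ is at least $j(f)(\kappa) > \lambda^*$, so the $M$-version of the recursion at $\kappa$ quantifies over a range of pairs that includes $(x^*,\lambda^*)$.

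By elementarity, $j(L)(\kappa) = y$ where $(y,\mu)$ is the $\lhd$-least pair that, in $M$, is a counterexample to $L = j(L) \upharpoonright \kappa$ being a Laver function at $\kappa$. The plan is to argue $y = x^*$, whereupon $j$ itself becomes a $V$-witness for $(x^*,\lambda^*)$, contradicting the $V$-counterexample status. First, $(x^*,\lambda^*)$ is an $M$-counterexample: any $M$-witness $j''$ is coded by an extender $E'' \in M \subseteq V$, and forming $\Ult(V, E'')$ yields an honest $V$-witness, contradicting the choice of $(x^*,\lambda^*)$. Second, no pair $(x',\lambda') \lhd (x^*,\lambda^*)$ is an $M$-counterexample: by $V$-minimality there is a $V$-witness $j'$ whose minimal-support extender $E'$ lies in $V_{j(f)(\kappa)} \subseteq M$, and the internal ultrapower $\Ult(M, E')$ supplies the desired $M$-witness, since the action of $E'$ on $L \upharpoonright \kappa$ and on $f'$ is determined by $E'$ alone and agrees in $V$ and $M$.

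The main obstacle is the calibration of $f$ at the outset: ensuring that $j(f)(\kappa)$ dominates the supports of all the relevant minimal-rank extenders used to reflect smaller counterexamples into $M$, which is exactly what the flexibility provided by Remark 2.2(3) allows. The remaining work is a routine absoluteness analysis, exploiting that for any extender $E$ present in both $V$ and $M$, the data of $\Ult(V, E)$ and $\Ult(M, E)$ relevant to Shelah-witnesshood at $\kappa$ are computed from $E$ alone and therefore coincide.
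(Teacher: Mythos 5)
Your skeleton is the right one (a recursion on inaccessibles choosing least counterexamples, then a reflection argument at $\kappa$), and it is what the paper's one-line proof gestures at. But the place where the Shelah case genuinely differs from the supercompact/strong template is exactly where your sketch has a hole: the calibration step, where you "arrange that $j(f)(\kappa)$ strictly exceeds the ranks of all minimal-support extenders in $V$ that could witness the Laver property for smaller pairs." For a strong cardinal this is free, since the embedding's strength can be taken above any prescribed ordinal; for a Shelah cardinal the only tool available (Remark 2.2(2)/(3)) produces embeddings with $j(f)(\kappa)<wt(\kappa)$, and you never argue that the ranks you need to dominate are bounded below $wt(\kappa)$. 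Two things go wrong: first, whether a non-counterexample pair $(x',\lambda')$ has \emph{any} witness of rank below $wt(\kappa)$ is essentially an instance of the lemma being proved (a witness must satisfy the extra constraint $j'(L)(\kappa)=x'$, and nothing caps the strength of the least such witness by $wt(\kappa)$); second, even granting each relevant rank is below $wt(\kappa)$, there are up to $2^{<\lambda^*}$ pairs below $(x^*,\lambda^*)$, while $cf(wt(\kappa))\leq 2^{\kappa}$, so the supremum of those ranks can be $wt(\kappa)$ itself and hence unreachable. Without this calibration your "Second" step (reflecting non-counterexamples into $M$) collapses, and with it the identification of $j(L)(\kappa)$ and the final contradiction.

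There are also two secondary gaps. A definable well-ordering of all pairs $(x,\lambda)$ with $x\in V_{wt(\kappa)}$ need not exist in $\ZFC$, and even if you fix a set well-ordering as a parameter, elementarity only tells you that $j(L)(\kappa)$ is least in the sense of $j(\lhd)$, not of $\lhd$, so the conclusion $y=x^*$ does not follow as written; the standard remedy (minimize only $\lambda$, choose $x$ by choice, and transfer the relevant facts as first-order properties of the set function $L$) avoids this, but then the final contradiction must be restructured around the pair $(j(L)(\kappa),\lambda^*)$, and the transfer of its $M$-counterexample status back to $V$ runs into the same calibration problem. Finally, the inference "$V_{j(f)(\kappa)}\subseteq M$, hence $wt^M(\kappa)\geq j(f)(\kappa)$" is unjustified: $M$ need not even satisfy that $\kappa$ is Shelah (its witnessing extenders live in $V_{wt(\kappa)}$, which is not contained in $M$), so the $M$-version of your recursion at $\kappa$ may fall into the unspecified "default bound" case, and as written nothing guarantees that $(x^*,\lambda^*)$ is even among the pairs the $M$-recursion considers. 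These joints are precisely the content a complete proof must supply, and supplying them while keeping everything below $wt(\kappa)$ is the real work behind the paper's remark that the lemma "can be proved as for supercompact or strong cardinals."
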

We are now ready to complete the proof of Theorem 5.3.
\\
{\bf Proof of Theorem 5.3.} Let
\begin{center}
$\PP_\k=\langle  \langle \PP_\a: \a\leq \k  \rangle,  \langle \lusim{\QQ}_\a: \a<\k \rangle  \rangle$
\end{center}
be an Easton support Prikry iteration of length $\k,$ such that
\begin{enumerate}
\item If $\a \notin \dom(L),$ then $\Vdash_{\PP_\a}$``$\lusim{\QQ}_\a$ is the trivial forcing'',
\item Suppose $\a \in \dom(L)$ and $L(\a)=(\lusim{\QQ}, \l),$ where $\lusim{\QQ}\in V_\l$ is a $\PP_\a$-name. Then
\begin{itemize}
\item If $\Vdash_{\PP_\a}$``$\lusim{\QQ}$ is a weakly $\a^+$-closed Prikry type forcing notion, then $\lusim{\QQ}_\a=\lusim{\QQ}$'',
\item Otherwise $\Vdash_{\PP_\a}$``$\lusim{\QQ}_\a$ is the trivial forcing''.
\end{itemize}
\end{enumerate}
Let $G$ be $\PP_\k$-generic over $V$. We show that the model $V[G]$ is as required. Thus fix in $V[G]$, a weakly $\k^+$-closed Prikry type forcing notion $\QQ\in V[G]_{wt(\k)},$ let $H$ be $\QQ$-generic over $V[G]$, and let  $f: \k \rightarrow \k, f\in V[G][H].$   Let   $\l$ be such that
\begin{enumerate}
\item $\k < \l < wt(\k),$
\item  $\QQ\in V[G]_{\l},$
\item $\l$ is an inaccessible cardinal of $V$.
\end{enumerate}
As $\PP_\k$ satisfies the $\k$-c.c., and $\QQ$ adds no new functions from $\k$ to $\k,$ there is some $g:\k \rightarrow \k$ in $V$ which dominates $f$, and we can further suppose that for all $\a <\k, g(\a) > f(\a)+\omega$. Now, using Lemma 5.4, let $h\in V, h: \k \rightarrow \k$ and $j: V \rightarrow M \supseteq V_{j(h)(\k)}$ with $crit(j)=\k$ be such that:
\begin{enumerate}
\item For some inaccessible cardinal $i(\a) > g(\a), h(\a)$ is the least $\gimel$-fixed point above  $i(\a)$
of uncountable cofinality,
\item $j(i)(\k) > \l,$
\item $j(L)(\k)=(\lusim{\QQ}, j(i)(\k)),$ where $\lusim{\QQ}$ is some fixed $\PP_\k$-name for $\QQ$ with $\lusim{\QQ}\in V_\l \subseteq V_{j(i)(\k)}.$
\end{enumerate}
The proof of the main theorem in \cite{gitik-shelah} shows that we can lift $j$ to some $j^*: V[G][H] \rightarrow M^* \supseteq V[G][H]_{j(h)(\k)}.$
As requested by the referee, we now provide some details.

 Set $\PP^M_{j(\kappa)} = j(\PP_\kappa)$ and
\[
\PP^M_{j(\k)}=\langle  \langle \PP^M_\a: \a\leq j(\k)  \rangle,  \langle \lusim{\QQ}^M_\a: \a<j(\k) \rangle  \rangle.
\]
We also assume that $j$ is generated by an extender $E= \langle E_a: a \in [j(h)(\kappa)]^{<\omega}    \rangle$.
In $M$, $\PP_\kappa$ forces ``$\lusim{\QQ}$ has the Prikry property'', and so  we can factor $\PP^M_{j(\kappa)}$ as
$\PP^M_{j(\kappa)}= \PP_\kappa * \lusim{\QQ} * \lusim{\PP}^M_{\text{tail}}$ (and hence $\QQ^M_\kappa=\QQ$). The next two claims can be proved as in
\cite{gitik-shelah}.
\begin{claim}
In $V$, there is a sequence $\langle  \lusim{D}_\alpha: \alpha < \kappa^+      \rangle$
such that
\begin{enumerate}
\item [(a)] In $M$, $\Vdash_{\kappa+1}$``$\lusim{D}_\alpha$ is a $\leq^*$-dense open subset of $\lusim{\PP}^M_{\text{tail}}$''.

\item [(b)] If $\lusim{D} \in M$ and $\Vdash^M_{\kappa+1}$``$\lusim{D}$ is a $\leq^*$-dense open subset of $\lusim{\PP}^M_{\text{tail}}$'',
then for some $\alpha < \kappa^+, \Vdash^M_{\kappa+1}$``$\lusim{D}_\alpha \subseteq \lusim{D}$''.
\end{enumerate}
\end{claim}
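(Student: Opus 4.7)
The plan is to adapt the construction of Gitik--Shelah \cite{gitik-shelah} to the Shelah-cardinal setting, using the extender representation of $M$. Since $j$ is generated by $E=\langle E_a:a\in[j(h)(\k)]^{<\omega}\rangle$, every element of $M$---and in particular every $\PP^M_{\k+1}$-name $\lusim{D}\in M$ for a $\leq^*$-dense open subset of $\lusim{\PP}^M_{\text{tail}}$---has the form $j(F)(a)$ for some $F\in V$ and some $a\in[j(h)(\k)]^{<\omega}$. By replacing $F(t)$ pointwise with a name for the whole tail forcing whenever $F(t)$ fails to name a $\leq^*$-dense open set, we may assume that every $F$ of interest produces valid $\leq^*$-dense open names at every coordinate via $j$.

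First I would enumerate in $V$ a family $\mathcal{F}$ of ``pre-names'' $F:[\k]^{<\omega}\to H(\k)$ rich enough to cover every such $\lusim{D}$. Under $\GCH$ and the strong inaccessibility of $\k$ one has $|H(\k)|=\k$ and $\k^{<\k}=\k$, so $|\mathcal{F}|=\k^+$; write $\mathcal{F}=\langle F_\alpha:\alpha<\k^+\rangle$. For each $\alpha<\k^+$ I would then construct $\lusim{D}_\alpha$ as a $\PP_{\k+1}$-name in $V$ whose $j$-image is, in $M$, forced to be a $\leq^*$-dense open subset of $\lusim{\PP}^M_{\text{tail}}$ uniformly refining $j(F_\alpha)(a)$ for every $a\in[j(h)(\k)]^{<\omega}$. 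The construction takes place inside $M[G_{\k+1}]$ via a $\leq^*$-decreasing recursion exploiting the $\leq^*$-closure of $\lusim{\PP}^M_{\text{tail}}$ coming from the Prikry-type weak closure of the individual $\lusim{\QQ}^M_\gamma$ together with the Easton support of the tail iteration; the closure $^\k M\subseteq M$ lets us process coordinates in $\k$-sized bundles.

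Condition (a) is then immediate from the construction. For (b), given $\lusim{D}\in M$ with $\Vdash^M_{\k+1}$``$\lusim{D}$ is $\leq^*$-dense open'', write $\lusim{D}=j(F_\alpha)(a^*)$ for some $\alpha<\k^+$ and $a^*\in[j(h)(\k)]^{<\omega}$; the uniform refinement property then yields $\Vdash^M_{\k+1}$``$\lusim{D}_\alpha\subseteq j(F_\alpha)(a^*)=\lusim{D}$'', using that $\leq^*$-open sets are closed downward under $\leq^*$.

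The hard part is the uniform refinement step: the family $\{j(F_\alpha)(a):a\in[j(h)(\k)]^{<\omega}\}$ has cardinality $j(h)(\k)$ in $M$, far exceeding the $\leq^*$-closure degree of $\lusim{\PP}^M_{\text{tail}}$, so the naive intersection is not $\leq^*$-dense open. The Gitik--Shelah mechanism---processing the $a$'s in $\k$-sized blocks using $^\k M\subseteq M$, and threading the partial refinements into a $\k^+$-length $\leq^*$-decreasing sequence via the Prikry property---is what makes the construction go through, and it carries over to the present setting because the iteration $\PP_\k$ was designed precisely so that its tail has the requisite $\leq^*$-closure and Prikry structure.
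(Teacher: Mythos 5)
Your opening move matches the paper's: represent any name $\lusim{D}$ as in (b) as $j(l)(a)$ with $l\in V$ and $a\in[j(h)(\k)]^{<\omega}$, and aim for a canonical refinement depending only on $l$, so that a $\k^+$-counting finishes the job. But your ``hard part'' rests on a misreading of the setup, and that is where the proposal breaks. You assert that the family $\{j(F_\alpha)(a): a\in[j(h)(\k)]^{<\omega}\}$ ``far exceeds the $\leq^*$-closure degree of $\lusim{\PP}^M_{\text{tail}}$, so the naive intersection is not $\leq^*$-dense open.'' In fact the whole construction was rigged so that the opposite holds: $h(\a)$ was chosen as the least $\gimel$-fixed point of uncountable cofinality above the inaccessible $i(\a)$, and the Laver function satisfies that $\dom(L)$ consists of inaccessibles with $L\restriction\l\subseteq V_\l$ for $\l\in\dom(L)$. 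By elementarity, any nontrivial stage $\a>\k$ of $j(\PP_\k)$ in $M$ is an $M$-inaccessible with $j(L)(\k)=(\lusim{\QQ},j(i)(\k))\in V^M_\a$, hence $\a>j(i)(\k)$; and since there are no $M$-inaccessibles in $(j(i)(\k),j(h)(\k))$ (any such would be a $\gimel$-closed point of uncountable cofinality below $j(h)(\k)$, contradicting its minimality), the tail has no nontrivial stages below $j(h)(\k)$ at all. So $\lusim{\PP}^M_{\text{tail}}$ \emph{is} $\leq j(h)(\k)$-weakly closed, and the paper's proof is exactly the ``naive'' one you reject: set $\tau_{\lusim{D}}=\bigcap_{b\in[j(h)(\k)]^{<\omega}}j(l)(b)$, which is forced to be $\leq^*$-dense open, is contained in $\lusim{D}=j(l)(a)$, and depends only on $l$, whence there are only $\k^+$ many such sets (Gitik--Shelah, Lemma 2.2).

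The replacement mechanism you propose does not fill this (nonexistent) gap and would not work if the gap were real: grouping the $a$'s into $\k$-sized blocks still leaves $j(h)(\k)$-many blocks, which cannot be threaded through a $\k^+$-length $\leq^*$-decreasing recursion; moreover, that recursion of conditions belongs to the \emph{next} claim (the construction of the master sequence $\langle\lusim{r}_\a:\a<\k^+\rangle$), not to the construction of the dense sets themselves. Two further soft spots: restricting to pre-names $F:[\k]^{<\omega}\to H(\k)$ is unjustified (names for dense open subsets of the tail are not hereditarily of size $<\k$; the $\k^+$ count comes from the dependence on $l$ alone, via GCH, not from bounding the values), and ``$\lusim{D}_\alpha$ is a $V$-name whose $j$-image refines $j(F_\alpha)(a)$ for every $a$'' is not a statement you get by elementarity from a single $V$-fact without again invoking exactly the closure of the tail that you denied.
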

\begin{proof}
For any name $\lusim{D}$ as in $($b$)$, if $l$ is such that $\lusim{D}=j(l)(a),$ for some $a \in [j(h)(\kappa)]^{<\omega}$,
then set
\[
\tau_{\lusim{D}} = \bigcap_{b \in [j(h)(\kappa)]^{<\omega}}  j(l)(b).
\]
As $\PP^M_{\text{tail}}$ is $\leq j(h)(\kappa)$-weakly closed, so
\begin{center}
$\Vdash_{\kappa+1}$``$\tau_{\lusim{D}}$ is a $\leq^*$-dense open subset of $\lusim{\PP}^M_{\text{tail}}$ and $\tau_{\lusim{D}} \subseteq \lusim{D}$''.
\end{center}
 It is also easily seen that
$\langle  \tau_{\lusim{D}}: \lusim{D}$ as in    $($b$)    \rangle $
has size $\kappa^+$ which completes the proof of the claim (see \cite{gitik-shelah}, Lemma 2.2).
\end{proof}
Set $X= \{ j(l)(\kappa): l: \kappa \to V          \}$. So $X \prec M$ and $^\kappa$$X \subseteq X$.
The next claim follows easily from the fact that $\PP^M_{\text{tail}}$ is $\leq j(h)(\kappa)$-weakly closed.
\begin{claim}
There exists a sequence $\langle \lusim{r}_\alpha: \alpha < \kappa^+  \rangle$ such that each
$\lusim{r}_\alpha \in X$ and $\Vdash^M_{\kappa+1}$``$\lusim{r}_\alpha \in \lusim{D}_\alpha$ and the sequence $\langle \lusim{r}_\alpha: \alpha < \kappa^+  \rangle$
is $\leq^*$-decreasing''.
\end{claim}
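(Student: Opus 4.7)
The plan is to construct $\langle \lusim{r}_\alpha : \alpha < \kappa^+ \rangle$ by recursion on $\alpha$, producing at each stage a name $\lusim{r}_\alpha \in X$ that is $\Vdash^M_{\kappa+1}$-forced to lie in $\lusim{D}_\alpha$ and to $\leq^*$-extend all previously chosen $\lusim{r}_\beta$. Two preliminary observations will guide the construction. First, using constant functions $l_v\colon \kappa\to V$ with $l_v(\xi)=v$, one has $j(l_v)(\kappa)=v$ for every $v\in V$, so $V\subseteq X$; in particular the sequence $\langle \lusim{D}_\alpha : \alpha<\kappa^+\rangle$ supplied by Claim 5.5 and each of its terms lie in $X$. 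Second, since $X\prec M$ and $^{\kappa}X\subseteq X$, any sequence of members of $X$ of length at most $\kappa$ belongs to $X$.

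At stage $\alpha<\kappa^+$, assume $\vec r = \langle \lusim{r}_\beta : \beta<\alpha\rangle$ has been built with each $\lusim{r}_\beta\in X$. Fix a bijection $\varphi\colon \kappa\to|\alpha|$ in $V\subseteq X$; reindexing $\vec r$ through $\varphi$ yields a $\kappa$-sequence of members of $X$, which then lies in $X$ by the $\kappa$-closure of $X$. Working in $M$, the tail forcing $\MPB^M_{\text{tail}}$ is $\le j(h)(\kappa)$-weakly closed and $\alpha<\kappa^+ < j(h)(\kappa)$, so there is a $\MPB_{\kappa+1}$-name $\lusim{s}$ with $\Vdash^M_{\kappa+1}$``$\lusim{s}\in\lusim{\MPB}^M_{\text{tail}}$ and $\lusim{s}\le^* \lusim{r}_\beta$ for every $\beta<\alpha$''. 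Since $\vec r\in X$ and $X\prec M$, elementarity delivers such an $\lusim{s}\in X$. Next, $\lusim{D}_\alpha\in X$ and $\Vdash^M_{\kappa+1}$``$\lusim{D}_\alpha$ is $\le^*$-dense open in $\lusim{\MPB}^M_{\text{tail}}$'', so elementarity again yields $\lusim{r}_\alpha\in X$ with $\Vdash^M_{\kappa+1}$``$\lusim{r}_\alpha\le^*\lusim{s}$ and $\lusim{r}_\alpha\in \lusim{D}_\alpha$''. This $\lusim{r}_\alpha$ carries the desired properties, and the recursion continues.

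The only subtle point is checking that the recursion never stalls: both the re-enumeration step (which needs $|\alpha|\leq\kappa$) and the existence of a $\le^*$-lower bound in $M$ (which needs the length of $\vec r$ to be below $j(h)(\kappa)$) depend on the fact that we iterate only up to $\kappa^+$, together with the choice of $h$ ensuring $j(h)(\kappa)$ is much larger than $\kappa^+$. Once this bookkeeping is in place, the argument reduces to two elementarity invocations per stage and is entirely formal; it parallels Lemma 2.3 of \cite{gitik-shelah}, which is indeed what motivates the formulation of Claim 5.5.
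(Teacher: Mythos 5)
Your overall plan --- recursively choosing names in $X$, using $^{\kappa}X \subseteq X$ to place initial segments of the construction inside $X$, and then invoking $X \prec M$ together with the forced weak closure of $\lusim{\PP}^M_{\text{tail}}$ to get a lower bound and then a member of $\lusim{D}_\alpha$ inside $X$ --- is exactly the intended argument (it is Lemma 2.3 of Gitik--Shelah, which is all the paper appeals to). However, the write-up rests on a false preliminary claim: if $l_v:\kappa\to V$ is the constant function with value $v$, then $j(l_v)$ is the constant function with value $j(v)$, so $j(l_v)(\kappa)=j(v)$, not $v$. What this gives is $j[V]\subseteq X$, not $V\subseteq X$; and $V\subseteq X$ is in fact impossible, since $X\subseteq M$ and $M\neq V$. (Relatedly, the full $\kappa^+$-sequence $\langle \lusim{D}_\alpha:\alpha<\kappa^+\rangle$ need not belong to $M$ at all --- $M$ is only closed under $\kappa$-sequences --- so it certainly need not lie in $X$.) This is not a cosmetic slip: your elementarity step producing $\lusim{r}_\alpha\in X$ forced into $\lusim{D}_\alpha$ needs $\lusim{D}_\alpha\in X$ as a parameter, and your only justification for that was $V\subseteq X$; the same goes for putting the bijection $\varphi$ into $X$.

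Both uses can be repaired, but the repairs have to be made explicitly. For the dense sets: by the proof of Claim 5.5 each $\lusim{D}_\alpha$ may be taken of the form $\tau_{\lusim{D}}=\bigcap_{b\in[j(h)(\kappa)]^{<\omega}}j(l)(b)$, which is definable in $M$ from the parameters $j(l)\in j[V]\subseteq X$ and $j(h)(\kappa)\in X$ (the latter directly from the definition of $X$, taking $l=h$); hence $\lusim{D}_\alpha\in X$ by $X\prec M$. For the bookkeeping: every ordinal $\gamma<\kappa^+$ lies in $X$, since for a surjection $e:\kappa\to\gamma$ one has $\gamma=j(l)(\kappa)$ where $l(\xi)=\otp(e[\xi])$; consequently $\varphi\in{}^{\kappa}X\subseteq X$ and the original $\alpha$-indexed sequence of names (not merely its reindexed version) lies in $X\subseteq M$, so $M$ sees a sequence of names of length $<\kappa^+<j(h)(\kappa)$ forced to be $\leq^*$-decreasing, and the forced weak closure of $\lusim{\PP}^M_{\text{tail}}$ yields a name for a lower bound, which elementarity lets you choose in $X$. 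With these corrections your recursion goes through and coincides with the paper's intended proof; without them, the crucial feature that keeps every $\lusim{r}_\alpha$ inside $X$ is unjustified.
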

In $V[G*H],$ let $G_{\text{tail}}$ be the filter on $\PP^M_{\text{tail}}$, generated by  $\langle r_\alpha: \alpha < \kappa^+  \rangle$ where
$r_\alpha=\lusim{r}_\alpha[G*H]$.
\begin{claim}
If $D \subseteq \PP^M_{j(\kappa)}$ is $\leq^*$-dense, then $D$ meets $G*H* G_{\text{tail}}.$
\end{claim}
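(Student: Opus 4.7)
The plan is to follow the approach of the main theorem of \cite{gitik-shelah}. Recall that $\PP^M_{j(\kappa)} = \PP_\kappa * \lusim{\QQ} * \lusim{\PP}^M_{\text{tail}}$ and that $G * H * G_{\text{tail}}$ is the filter generated by triples $(p, \lusim{q}, \lusim{r}_\alpha)$ with $(p, \lusim{q}) \in G * H$ and $\alpha < \kappa^+$. After replacing $D$ by its $\leq^*$-open downward closure, we may assume $D$ is $\leq^*$-dense open. The strategy is to reduce $D$ to a $\leq^*$-dense open subset of the tail forcing and then apply Claims 5.6 and 5.7.

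In $M$ I would define the $\PP_\kappa * \lusim{\QQ}$-name
\[
\lusim{D}^* = \{ (\lusim{r}, (p, \lusim{q})) : (p, \lusim{q}, \lusim{r}) \in D \},
\]
whose interpretation consists of all $\lusim{r}[G * H]$ for which some $(p, \lusim{q}) \in G * H$ satisfies $(p, \lusim{q}, \lusim{r}) \in D$. The heart of the proof is to show
\[
\Vdash^M_{\PP_\kappa * \lusim{\QQ}} \text{``}\lusim{D}^* \text{ is } \leq^*\text{-dense open in } \lusim{\PP}^M_{\text{tail}}\text{''}.
\]
To see this, fix $(p, \lusim{q})$ and a name $\lusim{r}$ with $(p, \lusim{q}) \Vdash \lusim{r} \in \lusim{\PP}^M_{\text{tail}}$, and let $\phi(\lusim{r})$ abbreviate ``$\exists \lusim{r}' \leq^* \lusim{r}, \; \lusim{r}' \in \lusim{D}^*$''. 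Suppose toward a contradiction that some $(p^\dagger, \lusim{q}^\dagger) \leq (p, \lusim{q})$ forces $\neg \phi(\lusim{r})$. By the Prikry property of the iteration $\PP_\kappa * \lusim{\QQ}$ there is $(p', \lusim{q}') \leq^* (p^\dagger, \lusim{q}^\dagger)$ deciding $\phi(\lusim{r})$, and it must force $\neg \phi(\lusim{r})$. Now the $\leq^*$-density of $D$, applied to $(p', \lusim{q}', \lusim{r})$, gives $(p'', \lusim{q}'', \lusim{r}'') \leq^* (p', \lusim{q}', \lusim{r})$ in $D$. Then $(p'', \lusim{q}'') \leq^* (p', \lusim{q}')$, and $(p'', \lusim{q}'')$ forces $\lusim{r}'' \leq^* \lusim{r}$ together with $\lusim{r}'' \in \lusim{D}^*$ (immediately from the definition of $\lusim{D}^*$). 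Hence $(p'', \lusim{q}'') \Vdash \phi(\lusim{r})$, contradicting $(p', \lusim{q}') \Vdash \neg \phi(\lusim{r})$.

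Having established this, Claim 5.6 applied in $M$ yields $\alpha < \kappa^+$ with $\Vdash^M_{\kappa + 1}$ ``$\lusim{D}_\alpha \subseteq \lusim{D}^*$''; together with Claim 5.7 this gives $\Vdash^M_{\kappa + 1}$ ``$\lusim{r}_\alpha \in \lusim{D}^*$''. Interpreting with $G * H$ yields $r_\alpha \in \lusim{D}^*[G * H]$, so by the definition of $\lusim{D}^*$ there is $(p, \lusim{q}) \in G * H$ and a name $\lusim{r}$ with $\lusim{r}[G * H] = r_\alpha$ and $(p, \lusim{q}, \lusim{r}) \in D$. Since $r_\alpha$ is a generator of $G_{\text{tail}}$, the triple $(p, \lusim{q}, \lusim{r})$ lies in $D \cap (G * H * G_{\text{tail}})$, completing the proof. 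The main obstacle is the middle step---establishing that $\lusim{D}^*$ is forced to be $\leq^*$-dense open---which depends on the Prikry property of $\PP_\kappa * \lusim{\QQ}$. That this iteration inherits the Prikry property (from the fact that each of its iterands is Prikry-type) is a standard feature of Easton-support Prikry iterations, but would still need to be verified explicitly for the specific $\PP_\kappa$ constructed in the theorem.
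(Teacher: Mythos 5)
Your argument is essentially the paper's own: you define the same induced name for a subset of the tail forcing (the paper's $\lusim{\bar D}$, your $\lusim{D}^*$), verify that it is forced to be $\leq^*$-dense, and then apply Claims 5.6 and 5.7 to find $r_\alpha$ in its interpretation, which produces a member of $D$ in $G*H*G_{\text{tail}}$. The only difference is that you spell out the density verification that the paper merely asserts; note that your appeal there to the Prikry property of $\PP_\kappa*\lusim{\QQ}$ is redundant, since $(p^{\dagger},\lusim{q}^{\dagger})$ already forces $\neg\phi(\lusim{r})$ and the $\leq^*$-density of $D$ alone yields the contradiction.
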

\begin{proof}
Let $\lusim{\bar D}$ be such that
\begin{center}
$(p, \lusim{q}) \in G*H$ and $(p, \lusim{q}, \lusim{r}) \in D \Longrightarrow ~~ \Vdash_{\kappa+1}$``$\lusim{r} \in \lusim{\bar D}$''
\end{center}
Then $\Vdash_{\kappa+1}$``$\lusim{\bar D}$ is $\leq^*$-dense'', and thus $r_\alpha \in \lusim{\bar D}[G*H],$
for some $\alpha < \kappa^+.$ This means that there are  $(p, \lusim{q}) \in G*H$ and $\lusim{r}$
such that $\Vdash_{\kappa+1}$``$\lusim{r}=\lusim{r}_\alpha$'' and $(p, \lusim{q}, \lusim{r}) \in D$.
The result follows.
\end{proof}
Suppose $a \in [j(h)(\kappa)]^{<\omega_1}$ and $\alpha=otp(a)$. Note that $a \in V[G*H]_{j(h)(\kappa)}$ (as $cf(j(h)(\kappa))> \omega$),
so let
$\lusim{a} \in V_{j(h)(\kappa)}$ be a name for $a$ such that
\[
\Vdash^M_{\PP^M_{j(\kappa)}} \text{~``} \lusim{a} \subseteq \lambda \text{~and~} \alpha=otp(\lusim{a})  \text{''}.
\]
Working in $V[G*H]$, define $E^*_a$ on $[\kappa]^{\alpha}$ as follows: for $A \subseteq [\kappa]^{\alpha},$
\[
A \in E^*_a \iff \exists~ \PP_\kappa\text{-name~}\lusim{A} \text{~for~}A \text{~such that there is ~}(p, \lusim{q}, \lusim{r}) \Vdash \text{``}\lusim{a} \in j(\lusim{A})\text{''~ in~} G*H*G_{\text{tail}}.
\]
Each $E^*_a$ is a $\kappa$-complete ultrafilter on $[\kappa]^{\alpha}$, further if $a \in V$ is finite, $E^*_a \supseteq E_a$.
Let $j^*_a: V[G*H] \to M^*_a \simeq \Ult(V[G*H], E^*_a)$ be the corresponding ultrapower embedding and for
$a \subseteq b$ let $k^*_{a,b}: M^*_a \to M^*_b$ be such that $j^*_b=k^*_{a,b} \circ j^*_a$. Let
\[
\langle M^*, \langle k^*_{a, \infty}: a \in [j(h)(\kappa)]^{<\omega_1}  \rangle   \rangle  =\limdir \langle \langle M^*_a: a \in [j(h)(\kappa)]^{<\omega_1} \rangle,
\langle k^*_{a,b}: a \subseteq b          \rangle  \rangle,
\]
where $k^*_{a, \infty}: M^*_a \to M^*.$ Also let $j^*: V[G*H] \to M^*$ be the direct
limit embedding. $M^*$ is easily seen to be well-founded (use the fact that $[j(h)(\kappa)]^{<\omega_1}$ is closed under countable unions)
and if we restrict only to $E^*_a$
a for finite $a$, the smaller direct limit embeds into the
full direct limit and is therefore well-founded.  From now on, let $M^*$ denote the smaller
direct limit; accordingly each $E^*_a$
is now given by the usual extender definition and $j^*$
is the ultrapower embedding. Now using Lemmas 2.4 and 2.5 from \cite{gitik-shelah}, $j^*$ is seen to be a $j(h)(\kappa)$-strongness embedding, as required.

Let's go back to the proof of Theorem 5.3.
Since $j(f)(\k) < j(g)(\k) < j(i)(\k),$ so $j^*$ witnesses the Shelahness of $\k$ in $V[G][H]$ with respect to $f$. Since $f$ was arbitrary, we conclude that $\k$
remains a Shelah cardinal in $V[G][H],$ and the theorem follows. \hfill$\Box$

We close the paper with the following question.
\begin{question}
Let $\k$ be a Shelah cardinal. Is there a generic extension in which $\k$ is the unique Shelah cardinal.
\end{question}
We may note that the answer to the question is trivial if the Shelah cardinals below $\k$ are bounded in $\k.$ The problem becomes difficult when $\{wt(\l): \l<\k$ is a Shelah cardinal$\}$ is unbounded in $\k$ (see Theorem 2.8).

\subsection*{Acknowledgements}
The author thanks the referee of the paper for many helpful comments and corrections.

School of Mathematics, Institute for Research in Fundamental Sciences (IPM), P.O. Box:
19395-5746, Tehran-Iran.

E-mail address: golshani.m@gmail.com

\end{document}